\theoremstyle{plain}
\newtheorem{thm}{Theorem}[section]
\newtheorem{lem}[thm]{Lemma}
\newtheorem{prop}[thm]{Proposition}
\theoremstyle{definition}
\newtheorem{defn}[thm]{Definition}
\mathchardef\semic="303B
\newcommand{\R}{{\mathbf R}}
\newcommand{\C}{{\mathbf C}}
\newcommand{\Z}{{\mathbf Z}}
\newcommand{\mH}{{\mathcal H}}
\newcommand{\mL}{{\mathcal L}}
\DeclareMathOperator{\re}{Re}
\newcommand{\sett}[2]{ \{ #1 \, \semic \, #2 \} }
\newcommand{\nul}{\textsf{N}}
\newcommand{\ran}{\textsf{R}}
\newcommand{\clos}[1]{\overline{#1}}
\newcommand{\sgn}{\text{{\rm sgn}}}
\newcommand{\barint}{\mbox{$ave \int$}}
\newcommand{\divv}{{\text{{\rm div}}}}
\newcommand{\curl}{{\text{{\rm curl}}}}
\newcommand{\esssup}{\text{{\rm ess sup}}}
\newcommand{\ta}{{\scriptscriptstyle \parallel}}
\newcommand{\no}{{\scriptscriptstyle\perp}}
\newcommand{\pd}{\partial}
\newcommand{\loc}{\text{{\rm loc}}}
\newcommand{\tN}{\widetilde N_*}
\newcommand{\tE}{\widetilde E}
\newcommand{\bx}{{\bf x}}
\newcommand{\by}{{\bf y}}
\newcommand{\bz}{{\bf z}}
\newcommand{\tB}{\widetilde B}
\newcommand{\dlp}{{\mathcal D}}
\newcommand{\tdlp}{\widetilde{\mathcal D}}
\newcommand{\slp}{{\mathcal S}}
\newcommand{\tslp}{\widetilde{\mathcal S}}
\def\barint_#1{\mathchoice
            {\mathop{\vrule width 6pt
height 3 pt depth -2.5pt
                    \kern -8.8pt
\intop}\nolimits_{#1}}%
            {\mathop{\vrule width 5pt height
3 pt depth -2.6pt
                    \kern -6.5pt
\intop}\nolimits_{#1}}%
            {\mathop{\vrule width 5pt height
3 pt depth -2.6pt
                    \kern -6pt
\intop}\nolimits_{#1}}%
            {\mathop{\vrule width 5pt height
3 pt depth -2.6pt
          \kern -6pt \intop}\nolimits_{#1}}}
\definecolor{gr}{rgb}   {0.,   0.8,   0. } 
\definecolor{bl}{rgb}   {0.,   0.5,   1. } 
\definecolor{mg}{rgb}   {0.7,  0.,    0.7} 
\begin{document}

\title[Layer potentials beyond SIOs]
{Layer potentials beyond singular integral operators}
\author[Andreas Ros\'en]{Andreas Ros\'en$\,^1$}
\thanks{$^1\,$Formerly Andreas Axelsson}
\address{Andreas Ros\'en, Matematiska institutionen, Link\"opings universitet, 581 83 Lin\-k\"oping, Sweden}
\email{andreas.rosen@liu.se}

\begin{abstract}
We prove that the double layer potential operator and the gradient of the single layer potential operator
are $L_2$ bounded for general second order divergence form systems.
As compared to earlier results, our proof shows that the bounds for the layer potentials are independent of well posedness for the Dirichlet problem and of De Giorgi--Nash local estimates.
The layer potential operators are shown to depend holomorphically on the coefficient matrix $A\in L_\infty$, showing uniqueness of the extension of the operators beyond singular integrals.
More precisely, we use functional calculus of differential operators with non-smooth coefficients
to represent the layer potential operators as bounded Hilbert space operators.
In the presence of Moser local bounds, in particular for real scalar equations and systems that are small perturbations of real scalar equations, these operators are shown to  be the usual singular integrals. 
Our proof gives a new construction of fundamental solutions to divergence form systems, valid also in dimension $2$.
\end{abstract}

\subjclass[2010]{Primary: 31B10; Secondary: 35J08.}

\keywords{Double layer potential, fundamental solution, divergence form system, functional calculus}

\thanks{Supported by Grant 621-2011-3744 from the Swedish research council, VR}
\maketitle

\section{Introduction}

This paper concerns the classical boundary value problems for divergence form second order elliptic systems
$$
   \sum_{j=1}^m\divv A^{ij} \nabla u^j=0,\qquad i=1,\ldots, m,
$$
for a vector valued function $u=(u^j)_{j=1}^m$
on the upper half space 
$\R^{1+n}_+:= \sett{(t,x)\in \R\times \R^n}{t>0}$, $n,m\ge 1$,
with boundary data in $L_2(\R^n)$.
In general, we only assume that the coefficients $A= (A^{ij})_{i,j=1}^m$ are uniformly bounded and accretive.
(Accretivity, or more precisely strict accretivity, is defined in \eqref{eq:accre} below.)
Unless otherwise stated, we assume that $A(t,x)=A(x)$ is independent of the transversal direction $t$.
However, we do not assume that $A$ is real or symmetric. 

By scalar coefficients, or equation, we mean that $A^{ij}=0$ for $i\ne j$.
For technical reasons we consider systems where the functions $u^j$ are complex-valued, and thus $A^{ij}(t,x)\in \mL(\C^{1+n})$.
However, working at the level of systems of equations of arbitrary size, complex coefficients are no more general than real coefficients. Indeed, using the relation $\C=\R^2$ we see that any system of equations with complex coefficients of size $m$ can be viewed as a system of equations with real coefficients of size $2m$. 

For an $1+n$-dimensional vector $f$,
we let $f_\no$ denote the normal/vertical part (identified with the corresponding scalar coordinate),
and write $f_\ta$ for the tangential/horizontal part.
Similarly, we write $\nabla_\ta$, $\divv_\ta$ and $\curl_\ta$ for the differential operators acting only in the tangential/horizontal variable $x$.
To ease notation, we use the Einstein summation convention throughout this paper.
Sometimes we shall even suppress indices $i,j$.

A classical method for solving the Dirichlet problem is to solve the associated double layer potential equation at the boundary $\R^n$.
In our framework, the method is the following.
Let $\Gamma_{(t,x)}= (\Gamma^{ij}_{(t,x)})_{i,j=1}^m$ be the fundamental solution for $\divv A^* \nabla$ in $\R^{1+n}$ with pole at $(t,x)$, that is $\divv (A^{ji})^* \nabla \Gamma^{jk}_{(t,x)}= \begin{cases}\delta_{(t,x)}, & i=k, \\ 0, & i\ne k\end{cases}$,
and let
$\pd_{\nu_{A^*}}\Gamma_{(t,x)}^{ij}:= ((A^{ki})^* \nabla \Gamma_{(t,x)}^{kj})_\no$ denote its (inward) conormal derivative.

Given a function $h:\R^n\to\C^m$ on the boundary, define the function
$$
  \dlp_t h^i(x):= \int_{\R^n} \big(-\pd_{\nu_{A^*}}\Gamma^{ji}_{(t,x)}(0,y) , h^j(y)\big)\, dy, \qquad (t,x)\in\R^{1+n}_+,
$$
where $-\pd_{\nu_{A^*}}$ is the outward conormal derivative.
The function $u(t,x):= \dlp_t h(x)$ then solves the equation $\divv A \nabla u=0$ in $\R^{1+n}_+$, and has boundary trace
$$
  \dlp h^i(x):= \lim_{t\to 0^+}  \int_{\R^n} \big(-\pd_{\nu_{A^*}}\Gamma^{ji}_{(t,x)}(0,y) , h^j(y)\big)\,dy.
$$
Finding the solution $u$ with Dirichlet data $\varphi:\R^n\to\C^m$ on the boundary, then amounts to solving the double layer equation 
$$
   \dlp h= \varphi
$$
for $h$, which then gives the solution $u(t,x)= \dlp_t h(x)$.
In the case of smooth coefficients $A$, it is well known that the operator $\dlp$ is well defined and is $\tfrac 12 I$ plus an integral operator. 
For general systems with non-smooth coefficients, as considered in this paper, the double layer potential 
operator  $\dlp$ is beyond the scope of singular integral theory.

Similarly, the single layer potential is used to solve the Neumann problem. See Section~\ref{sec:singlelayer}. In this introduction, we focus on the double layer potential and the Dirichlet problem.

During the last years, new results on boundary value problems for more general non-smooth divergence form systems have been proved.
In particular, there have been two seemingly different developments, one based on singular integrals (S) and one based on functional calculus (F).
The purpose of this paper is to demonstrate that the singular integral operators used in (S) actually are special cases of the abstract operators used in (F).

\begin{itemize}
\item[(\rm S)] 
In the paper \cite{AAAHK}  by Alfonseca, Auscher, Axelsson, Hofmann and Kim, it was proved in particular
that boundedness and invertibility of the layer potential operators for coefficients $A_0$ implies  boundedness and invertibility of the layer potential operators for coefficients $A$ whenever $\|A-A_0\|_\infty$ is small, depending on $A_0$. Here $A_0$ and $A$ are assumed to be scalar and complex, and such that De Giorgi--Nash local H\"older estimates hold for solutions to these equations.
Boundedness here includes square function estimates.
This boundedness and invertibility result was shown to hold for real symmetric coefficients, and the result was also known for coefficients of block form and for constant coefficients.

During the writing of this paper, Hofmann, Kenig, Mayboroda and Pipher~\cite{HKMP} have proved $L_p$ well posedness, for some $p<\infty$ depending on $A$, of the Dirichlet problem for general scalar equations with real and $t$-independent coefficients. 
From this they deduce, in \cite[Cor. 1.25]{HKMP}, boundedness in $L_2$ (but not invertibility) of the layer potentials for general real scalar equations and small complex perturbations of such, 
by inspection of the proofs in \cite{AAAHK}.

After submission of this paper, Grau de la Herr\'an and Hofmann~\cite{GdlHH} proved $L_2$ estimates for layer potentials with complex coefficients, assuming De Giorgi--Nash local estimates.
\item[{\rm (F)}]
Auscher, Axelsson and McIntosh~\cite{AAM} proved that 
the $L_2$ Dirichlet (and Neumann) problem is well posed for systems with coefficients $A$ which are small $L_\infty$ perturbations of Hermitian, constant or block form coefficients.
Instead of the double layer potential operator $\dlp$ above, this used an operator $\tdlp$ on $L_2(\R^n)$ defined by  functional calculus from an underlying differential operator on $\R^n$.
More precisely, this used a self-adjoint first order differential operator $D$ and a transformed multiplication operator $B$ formed point wise from the coefficients $A$,
to construct a solution
$$
  u^i(t,x)=\tdlp_t h^i(x) :=( b_t(BD) h)^i_\perp (x),\qquad (t,x)\in \R^{1+n}_+,
$$
where the function $b_t(z):= \begin{cases} e^{-tz},& \re z>0, \\ 0, & \re z<0, \end{cases}$ is applied to the operator $BD$
by functional calculus. Here $BD$, and therefore $b_t(BD)$, acts on $\C^{m(1+n)}$-valued functions $h$ on $\R^n$.
\end{itemize}

Both works \cite{AAAHK, AAM}  build on harmonic analysis developed for the solution of the Kato square root problem by
Auscher, Hofmann, Lacey, McIntosh and Tchamitchian~\cite{AHLMcT}.
However, the approach (F) is more general. 
On the one hand (S) uses De Giorgi--Nash local H\"older estimates, which holds for real scalar equations, and small $L_\infty$ perturbations of such, but not for general $A$. 
On the other hand, (F) proves that $\tdlp_t$ in fact is $L_2$-bounded for any $t$-independent and uniformly bounded and accretive coefficients $A$;
it is only invertibility of $\tdlp:= \lim_{t\to 0^+}\tdlp_t$ which may fail.
Note that (F) does not use De Giorgi--Nash local bounds at all.

Unlike $\tdlp$, the definition of the double layer potential operator $\dlp$ require the existence of a fundamental solution to $\divv A^*\nabla$. For divergence form systems, such fundamental solutions were constructed by Hofmann and Kim~\cite{HK} under the hypothesis that solutions to $\divv A\nabla u=0$ and $\divv A^*\nabla u=0$ satisfy 
{\em De Giorgi--Nash local H\"older estimates}.
That solutions to $\divv A\nabla u=0$ satisfy such estimates means that
\begin{equation}  \label{eq:dGN}
   \esssup_{\by, \bz\in B(\bx;R), \by\ne \bz} \frac {|u(\by)-u(\bz)|}{|\by-\bz|^\alpha} \lesssim
    R^{-\alpha-(1+n)/2}\left( \int_{B(\bx;2R)} |u|^2 \right)^{1/2} 
\end{equation}
holds whenever $u$ is a weak solution to $\divv A\nabla u=0$ in $B(\bx; 2R)\subset\R^{1+n}$, for some $\alpha>0$.
It is known that \eqref{eq:dGN} is equivalent to the gradient estimate
\begin{equation}  \label{eq:Morrest}
  \int_{B(\bx, r)} |\nabla u |^2 \lesssim (r/R)^{n-1+2\mu}\int_{B(\bx, R)} |\nabla u |^2, \qquad 0<r<R,
\end{equation}
for all weak solutions $u$ to $\divv A\nabla u=0$ in $B(\bx; R)\subset\R^{1+n}$, for some $\mu>0$.

It is known that \eqref{eq:dGN}, or equivalently \eqref{eq:Morrest}, holds for all divergence form systems $\divv A \nabla u=0$ where $A$ is real and scalar, and small $L_\infty$-perturbations of such ($t$-independence of $A$ is not needed here).
Estimates \eqref{eq:dGN} and \eqref{eq:Morrest} also imply the {\em Moser local boundedness estimate}
\begin{equation}  \label{eq:Mos}
   \esssup_{\by\in B(\bx;R)} |u(\by)| \lesssim
    R^{-(1+n)/2}\left( \int_{B(\bx;2R)} |u|^2 \right)^{1/2} 
\end{equation}
whenever $\divv A \nabla u=0$ in $B(\bx; 2R)\subset\R^{1+n}$.
We refer to \cite[Sec. 2]{HK} for further explanation of these results.

At the 8th International Conference on Harmonic Analysis and Partial Differential Equations at El Escorial 2008, S. Hofmann formulated as an open problem whether (F) as a special case implies the result (S).
Our main result in this paper is that this is indeed the case, as $\dlp=\tdlp$ whenever $\dlp$ is defined. More precisely, we prove the following.

\begin{thm}  \label{thm:1}
  Let $n,m\ge 1$, and let $A=A(x)\in L_\infty(\R^n;\mL(\C^{m(1+n)}))$ be $t$-independent
  and accretive  in the sense that there exists $\kappa>0$ such that
\begin{equation}   \label{eq:accre}
  \re  \int_{\R^n} ( A^{ij}(x) f^j(x), f^i(x) ) dx \ge \kappa \int_{\R^n} |f(x)|^2 dx,
\end{equation}
for all $f\in L_2(\R^n;\C^{m(1+n)})$ such that $\curl_\ta f_\ta=0$.

Assume that whenever $u$ is a weak solution to $\divv A \nabla u=0$ in a ball $B(\bx; 2R)$, 
$u$ is almost everywhere equal to a continuous function and the 
Moser local boundedness estimate \eqref{eq:Mos} holds.
Then there exists a fundamental solution $\Gamma_{(t,x)}\in W^1_{1,\loc}(\R^{1+n};\C^{m^2})$ to $\divv A^*\nabla$ with estimates
$$
  \int_{|y-x|>R}  |\nabla \Gamma_{(t, x)}(s,y)|^2 dy\lesssim (R+|s-t|)^{-n},
$$
for all $R>0$, $t,s\in\R$ and $x\in\R^n$.
Moreover
\begin{equation}   \label{eq:reprofdlp}
 \int_{\R^n}  \big(-\pd_{\nu_{A^*}}\Gamma^{ji}_{(t,x)}(0,y) , h^j(y)\big)\,dy=
( b_t(BD) h)^i_\perp (x)
\end{equation}
holds for almost all $(t,x)\in \R^{1+n}_+$ and all scalar functions $h\in L_2(\R^n;\C^m)$.
The right hand side is defined in Section~\ref{sec:funccalc}. In particular, we here identify $h$ with a normal vector field $h\in L_2(\R^n;\C^{m(1+n)})$.
\end{thm}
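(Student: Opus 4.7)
The statement has two parts: existence of a fundamental solution $\Gamma$ to $\divv A^*\nabla$ with the stated off-diagonal $L_2$ gradient decay, and the identity \eqref{eq:reprofdlp}. My plan is to build $\Gamma$ directly from the functional calculus of \cite{AAM}, using the first order operator $DB^*$ that governs solutions to the adjoint equation, so that the identification with the right-hand side of \eqref{eq:reprofdlp} falls out of a duality of functional calculi rather than from comparing two independently-defined objects. The Moser hypothesis \eqref{eq:Mos} enters only to pass from $L_2$-averaged information to pointwise values of $\Gamma$, replacing the De Giorgi--Nash input in the Hofmann--Kim scheme.

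\textbf{Construction and decay of $\Gamma$.} By \cite{AAM}, every $L_2$-bounded weak solution $v$ of $\divv A^*\nabla v=0$ on $\{s>t\}$ has conormal gradient of the form $s\mapsto e^{-(s-t)DB^*}\chi^+(DB^*)g$ for some $g\in L_2(\R^n;\C^{m(1+n)})$, with the analogous formula for $\chi^-(DB^*)$ on $\{s<t\}$. For a source $(t,x)$ I would construct $\Gamma_{(t,x)}$ by solving the homogeneous adjoint equation separately on each of the two slabs $\{s\gtrless t\}$, with data on $\{s=t\}$ chosen so that the jump in the conormal gradient matches the tensorial delta source at $(t,x)$. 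The $L_2$-boundedness of $b_s(DB^*)$ then places $\Gamma$ in $\dot W^{1,2}_{\loc}(\R^{1+n}\setminus\{(t,x)\})$, while \eqref{eq:Mos} upgrades this to a pointwise bound $|\Gamma|\lesssim (R+|s-t|)^{1-n}$ on dyadic shells. The gradient decay then follows by the Caccioppoli inequality on a shell $\{R<|y-x|<2R\}\cap\{|s'-s|<R\}$ combined with this pointwise bound. In dimension $n=1$ the space $\dot W^{1,2}(\R^2)$ is not a function space, which forces a normalisation through a reference point $(t_0,x_0)$: one constructs $\Gamma_{(t,x)}-\Gamma_{(t_0,x_0)}$ directly.

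\textbf{Proof of \eqref{eq:reprofdlp}.} Fix $(t,x)\in\R^{1+n}_+$ with $t>0$ and $h\in L_2(\R^n;\C^m)$ viewed as a normal field. Put $u(s,y):=(b_s(BD)h)^i_\perp(y)$, a weak solution of $\divv A\nabla u=0$ with $L_2$ non-tangential and square-function bounds from \cite{AAM}, and $v(s,y):=\Gamma^{ji}_{(t,x)}(s,y)$ as a vector in index $j$. Apply Green's identity between $u$ (for $A$) and $v$ (for $A^*$) on $\{\varepsilon<s\}\cap B_R(t,x)$: the interior contribution equals $u^i(t,x)$ through the pole of $v$; the lateral boundary vanishes as $R\to\infty$ by the gradient decay from the previous paragraph together with the $L_2$ bound on $u$; and the bottom boundary, as $\varepsilon\to 0$, yields a double-layer integral $\int_{\R^n}(-\pd_{\nu_{A^*}}\Gamma^{ji}_{(t,x)}(0,y),u^j(0,y))\,dy$ plus a single-layer integral $\int_{\R^n}(\Gamma^{ji}_{(t,x)}(0,y),(A\nabla u)^j_\perp(0,y))\,dy$. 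The key step is then a duality identification: the boundary data pair $(u(0,\cdot),(A\nabla u)_\perp(0,\cdot))$ is nothing but $\chi^+(BD)h$ in the identification used to define $b_t(BD)$, and the pair $(\Gamma(0,\cdot),-\pd_{\nu_{A^*}}\Gamma(0,\cdot))$ is by construction the adjoint object for $DB^*$; the adjoint relation $\langle BD\,\cdot,\,\cdot\rangle=\langle\,\cdot,DB^*\,\cdot\rangle$ then absorbs the single-layer term into the double-layer term and produces the integral against $h$ itself.

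\textbf{Main obstacle.} The principal difficulty is this last duality step: because $u(0,\cdot)$ differs from $h$ by the spectral projection $\chi^+(BD)$, one cannot simply discard the single-layer contribution, and the cancellation must be tracked at the functional-calculus level rather than via a classical jump relation. This is precisely what forces $\Gamma$ to be built from the same functional-calculus framework as the right-hand side of \eqref{eq:reprofdlp}, and is what makes the theorem a bridge between the approaches (S) and (F). A secondary obstacle is the $n=1$ construction, where the energy-space method must be normalised by subtraction of a reference value.
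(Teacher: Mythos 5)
There are genuine gaps, and they sit at the two load-bearing points of the argument. First, your route to the decay estimate applies the Moser bound \eqref{eq:Mos} to $\Gamma$ itself (``\eqref{eq:Mos} upgrades this to a pointwise bound $|\Gamma|\lesssim (R+|s-t|)^{1-n}$''), but $\Gamma_{(t,x)}$ solves the \emph{adjoint} equation $\divv A^*\nabla\Gamma=0$ away from the pole, while the hypothesis of Theorem~\ref{thm:1} assumes \eqref{eq:Mos} only for solutions of $\divv A\nabla u=0$; no local regularity whatsoever is assumed for $A^*$-solutions, so neither the pointwise shell bound nor the Caccioppoli step is available (and even granting them, Caccioppoli yields solid Whitney-region integrals, not the slice estimate $\int_{|y-x|>R}|\nabla\Gamma_{(t,x)}(s,y)|^2dy\lesssim(R+|s-t|)^{-n}$ asserted in the theorem). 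In the paper, \eqref{eq:Mos} is used on the $A$-solutions $(e^{-t_0BD}v_0)_\no$ to bound the point-evaluation functional $v_0\mapsto(e^{-t_0BD}v_0)^i_\no(x_0)$ by $|t_0|^{-n/2}\|v_0\|_2$; combined with the duality $(E_{A^*}^-,N\tE_A^+)$ of Proposition~\ref{prop:dualityandintertw} and Proposition~\ref{prop:Xsol}, this \emph{defines} $\nabla_{A^*}\Gamma_{(t_0,x_0)}(0,\cdot)\in E_{A^*}^-L_2$ by Riesz representation (Definition~\ref{def:fundsolpm}), and the off-diagonal slice decay is then obtained with no regularity for $A^*$-solutions via the Lipschitz-graph pullback and Stokes argument of Propositions~\ref{prop:stokes} and~\ref{prop:siobounds}. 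Relatedly, your construction ``with data on $\{s=t\}$ chosen so that the jump in the conormal gradient matches the tensorial delta source'' is not meaningful inside the $L_2$ functional calculus: a Dirac mass is not admissible $L_2$ data for $e^{-sD\tB}$ (note also that the relevant generator is $D\tB$ with $\tB=NB^*N=\widehat{A^*}$, not $DB^*$), and showing that the two half-space pieces glue with exactly a delta at the pole is itself a nontrivial step (Proposition~\ref{prop:eqaccross0}), not something one can build in by prescription.

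Second, for \eqref{eq:reprofdlp} you candidly label the decisive step --- absorbing the single-layer term and coping with the fact that $u(0,\cdot)$ is $(\tE_A^+h)_\perp$ rather than $h$ --- as the ``main obstacle,'' i.e.\ you leave it unproved, and that is precisely where the content of the theorem lies. The paper avoids ever splitting Green's formula into single plus double layer contributions: the defining identity pairs $\nabla_{A^*}\Gamma_{(t,x)}(0,\cdot)$ against $N v_0$ where $v_0\in\tE_A^+L_2$ is the trace of a \emph{conjugate system} (Section~\ref{sec:GreenHalf}), so the two layer terms are merged from the start; then taking $v_0=\tE_A^+h$ with $h$ purely normal, the duality of Proposition~\ref{prop:dualityandintertw} transfers $\tE_A^+$ across the pairing as $E_{A^*}^-$, which acts as the identity on $\nabla_{A^*}\Gamma_{(t,x)}(0,\cdot)\in E_{A^*}^-L_2$, and the pairing with $Nh$ leaves exactly $-\pd_{\nu_{A^*}}\Gamma$ against $h$. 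Without an argument of this kind (or some substitute for the cancellation you defer), your Green's-identity computation only yields $u^i(t,x)=\dlp[u(0,\cdot)]+\slp[\pd_{\nu_A}u(0,\cdot)]$, which is not \eqref{eq:reprofdlp}. So, as written, the proposal does not establish either half of the theorem, although its overall instinct --- build $\Gamma$ inside the same functional-calculus framework and use duality between the $A$- and $A^*$-flows --- is the right one.
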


This theorem allows us to transfer known results for the double layer potential operator 
$$
  \tdlp_t^A h^i= \tdlp_t h^i :=( b_t(BD) h)^i_\perp,\qquad t>0,
$$
defined through functional calculus, to the double layer potential operator
$$
 \dlp_t^A h^i= \dlp_t h^i:= \int_{\R^n} \big(-\pd_{\nu_{A^*}}\Gamma^{ji}_{(t,\cdot)}(0,y) , h^j(y)\big)\, dy, \qquad t>0,
$$
defined classically as an integral operator.
The following is a list of such known results for $\tdlp_t^A$, which therefore also hold for 
$\dlp_t^A=  \tdlp_t^A$ under the hypothesis of Theorem~\ref{thm:1}.
These results for $\tdlp^A_t$ follow by inspection of the proof of \cite[Thm. 2.3 and 2.2]{AAM}, and extends the results for $\dlp^A_t$ from \cite{AAAHK, HKMP}.

\begin{itemize}
\item We have estimates
$$
\sup_{t>0} \|\tdlp_t h\|_2^2+  \int_0^\infty \|\pd_t \tdlp_t h \|_2^2\, tdt + \| \tN (\tdlp_t h)\|_2^2\lesssim \|h\|^2,
$$
for any system with bounded and accretive coefficients $A$, where the
modified non-tangential maximal function $\tN$ is defined in Section~\ref{sec:funccalc}.
In particular, the implicit constant in this estimate depends only on $\|A\|_\infty$, $\kappa_A$, $n, m$, but not on the De Giorgi--Nash--Moser constants.
In presence of Moser local boundedness estimates of solutions, $\tN$ can be replaced by the 
usual point wise non-tangential maximal function.

\item
For any system with bounded and accretive coefficients $A$, the operators $\tdlp_t$ converge strongly in $L_2$ and there exists an $L_2(\R^n;\C^m)$ bounded operator $\tdlp$ such that
$$
  \lim_{t\to 0^+}\|\tdlp_t h- \tdlp h\|_2=0,\qquad\text{for all } h\in L_2(\R^n;\C^m).
$$

\item
The map
$$
  \{ \text{accretive } A\in L_\infty(\R^n;\mL(\C^{m(1+n)}))\} \ni A\mapsto \tdlp^A\in \mL(L_2(\R^n;\C^{m}))
$$
is a holomorphic map between Banach spaces. 
In particular, $\tdlp^A\in \mL(L_2(\R^n;\C^{m}))$ depends locally Lipschitz continuously on $A\in L_\infty(\R^n;\mL(\C^{m(1+n)}))$, and therefore invertibility of $\tdlp^A$ is stable under small $L_\infty$ perturbations of $A$.

\item
The operator $\tdlp^A\in \mL(L_2(\R^n;\C^{m}))$ is invertible when $A$ is Hermitian, $(A^{ij})^*= A^{ji}$, when 
$A$ is constant, $A(x)= A$, and when $A$ is of block form, $A^{ij}_{\no\ta}=0= A^{ij}_{\ta\no}$.

\item
It is also known that $\tdlp^A$ is not invertible for many $A$, even for real and scalar (but non-symmetric) coefficients $A$ in the plane, $n=1$.
A counter example was found in \cite[Thm 3.2.1]{KKPT} among the coefficients
$$
  A(x)= \begin{bmatrix} 1 & k\sgn(x) \\ -k\sgn(x) & 1 \end{bmatrix}.
$$
Note that $\dlp^A= \tdlp^A$ for all these coefficients by Theorem~\ref{thm:1}.
It was shown in \cite{AxNon} that $\tdlp^A$ is not invertible for these coefficients when $k=1$.
Moreover, from \cite{AxNon} and \cite[Rem. 5.4]{AAM} it follows that $\tdlp^A$ is invertible for these coefficients when $k\ne 1$, but that the coefficients with $k>1$ are disconnected from the identity $A=I$
by the set of coefficients for which $\tdlp^A$ is not invertible.
\end{itemize}

In the process of proving Theorem~\ref{thm:1}, we also give a new construction of fundamental solutions to divergence form systems. As compared to \cite{HK}, this works also in dimension $2$, and constructs the gradient fundamental solution directly using functional calculus, taking \eqref{eq:reprofdlp} as a definition of the fundamental solution.
Extending this construction to $t$-dependent coefficients, we prove the following result.
Note that we formulate this result in dimension $n$, not $1+n$.

\begin{thm}   \label{thm:2}
  Let $n\ge 2$, $m\ge 1$. Assume that
$A_0\in L_\infty(\R^{n};\mL(\R^{n}))$ are real and scalar coefficients, identified with a matrix acting component-wise 
on $f\in \C^{mn}$, which are accretive in the sense that there exists $\kappa>0$ such that
$$
  \re(A_0(x)f,f)\ge \kappa |f|^2,\qquad\text{for all } f\in \C^{mn}, x\in\R^n.
$$

Then there exists $\epsilon>0$ such that whenever $A\in L_\infty(\R^{n};\mL(\C^{mn}))$ is such that
$\esssup_{x\in\R^n}|A(x)-A_0(x)|<\epsilon$, then there exists a fundamental solution $\Gamma_{x}$ to $\divv A \nabla$,
i.e. a function $\Gamma_{x}\in W_{1,\loc}^1(\R^{n};\C^{m^2})$ such that $\divv A^{ij}\nabla\Gamma^{jk}_{x}=\begin{cases} \delta_{x}, & i= k, \\ 0, & i\ne k,\end{cases}$ in distributional sense, with estimates
\begin{equation}   \label{eq:keyestimateofgradfund}
  \int_{R<|y-x|<2R}  |\nabla \Gamma_{ x}(y)|^2\, dy\lesssim R^{2-n},
\end{equation}
for all $R>0$ and $x\in\R^n$.
\end{thm}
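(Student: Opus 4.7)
The plan is to extend the functional-calculus construction underlying Theorem~\ref{thm:1} to the $t$-dependent setting by replacing the semigroup $b_t(BD)$ with a non-autonomous propagator. Fix coordinates $(t,x')\in\R\times\R^{n-1}$ on $\R^n$ and regard $A(t,x')$ as a $t$-dependent family of coefficients on $\R^{n-1}$ acting on $\C^{mn}=\C^{m(1+(n-1))}$. Following Section~\ref{sec:funccalc}, the equation $\divv A\nabla u=0$ rewrites as a first-order Cauchy system
$$
\pd_tF(t)+DB(t)F(t)=0,\qquad F(t)\in L_2(\R^{n-1};\C^{mn}),
$$
for the conormal gradient $F=(\pd_{\nu_A}u,\nabla_\ta u)$, with $D$ the self-adjoint first-order operator on $L_2(\R^{n-1})$ and $B(t)$ the pointwise transform of $A(t,\cdot)$. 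The aim is to construct a uniformly bounded propagator $E(t,s)\colon L_2(\R^{n-1})\to L_2(\R^{n-1})$ and to define $\nabla\Gamma_x$ column-by-column by applying $E(\cdot,0)$ to the distributional data encoding the $k$-th unit source at $(0,x')$, directly mirroring \eqref{eq:reprofdlp}.

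For the reference coefficients $A_0$, which are real and scalar, each frozen operator $B_0(t)D$ admits a bounded $H^\infty$ calculus on $L_2(\R^{n-1})$ uniformly in $t$, via the Kato square root results of \cite{AHLMcT}, and in particular satisfies uniform off-diagonal (Gaffney-type) estimates. Combined with the Moser estimate \eqref{eq:Mos} for $A_0$, a standard freezing/patching argument produces a uniformly bounded propagator $E_0(t,s)$ with the same off-diagonal decay. For $A$ with $\|A-A_0\|_\infty<\epsilon$, the propagator $E(t,s)$ of the $A$-problem is obtained from the Duhamel expansion
$$
E(t,s)=E_0(t,s)+\int_s^tE_0(t,r)\bigl(DB_0(r)-DB(r)\bigr)E(r,s)\,dr,
$$
which converges in operator norm once $\epsilon$ is small enough in terms of $\kappa$ and $\|A_0\|_\infty$. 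The off-diagonal decay transfers from $E_0$ to $E$ by iterating the same bound, with constants deteriorating only by a factor $1+O(\epsilon)$.

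With $E(t,s)$ in hand, the annular estimate \eqref{eq:keyestimateofgradfund} follows from integrating the off-diagonal $L_2$ bounds for $E(t,0)$ applied to the point source data, slicing the annulus $\{R<|y-x|<2R\}$ according to the size of $|t|$; scaling and the annulus volume $\sim R^n$ produce the factor $R^{2-n}$. The distributional identity $\divv A^{ij}\nabla\Gamma_x^{jk}=\delta_x\delta_{ik}$ follows from $E(t,s)$ solving the Cauchy system away from $t=0$, the Dirac source arising from the prescribed jump data at $t=0$; the same identity holds for $A_0$ by reduction to the $t$-independent case of Theorem~\ref{thm:1}, and is propagated to $A$ by the Duhamel series.

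The main obstacle is the construction and uniform control of the non-autonomous propagator $E_0(t,s)$ for $A_0$: the operators $DB_0(t)$ are neither dissipative nor classically sectorial, so their uniform $L_2$-boundedness and off-diagonal decay rest on quadratic estimates that have to be made uniform in the freezing parameter and stitched together across freezing intervals. A subsidiary point is that in dimension $n=2$ the scalar fundamental solution $\Gamma_x$ has only logarithmic growth, so one must work exclusively with $\nabla\Gamma_x$ and verify the distributional identity without ever defining $\Gamma_x$ pointwise --- which is precisely what taking \eqref{eq:reprofdlp} as the definition accomplishes.
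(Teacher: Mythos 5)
Your approach has a genuine gap, in fact two. First, the Duhamel identity you propose,
$E(t,s)=E_0(t,s)+\int_s^t E_0(t,r)\bigl(DB_0(r)-DB(r)\bigr)E(r,s)\,dr$, has as its perturbation the operator $D\bigl(B_0(r)-B(r)\bigr)$, which is a first order differential operator: smallness of $\|A-A_0\|_\infty$ makes $B_0-B$ small in $L_\infty$ but does not make $D(B_0-B)$ small, or even bounded, on $L_2(\R^{n-1})$. So the series does not converge in operator norm; the best one could hope for is a smoothing bound $\|E_0(t,r)D\|\lesssim |t-r|^{-1}$, whose $r$-integral diverges, so the iteration does not close. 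Second, and more fundamentally, the ``standard freezing/patching argument'' producing a uniformly $L_2$-bounded propagator $E_0(t,s)$ for the reference problem does not exist: once you single out one coordinate as time, $A_0$ (which in Theorem~\ref{thm:2} depends on \emph{all} $n$ variables) gives a family $B_0(t)$ that is merely bounded measurable in $t$, and uniform $L_2$ bounds for the associated non-autonomous system are not a consequence of the frozen operators $B_0(t)D$ having bounded $H^\infty$ calculus. Control of such propagators is known to require quantitative regularity of the $t$-dependence (e.g.\ smallness of a Carleson measure built from the $t$-oscillation), and without it the square-function/$L_2$ bounds you need can fail. Since the whole construction, the off-diagonal decay, and the jump argument for the Dirac source all rest on these two steps, the proof does not go through.

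For comparison, the paper avoids treating any genuinely $t$-dependent evolution: it \emph{adds} an auxiliary variable, defining on $\R^{1+n}$ the $t$-independent coefficients $\tilde A(t,x)[f_\no,f_\ta]^t:=[f_\no,A(x)f_\ta]$, applies Theorem~\ref{thm:1} (whose Moser hypothesis holds for small perturbations of real scalar $A_0$) to get a fundamental solution $\widetilde\Gamma_{(0,x_0)}$ on $\R^{1+n}$ with the annular gradient bounds $(R+|t|)^{-n}$, and then integrates the tangential gradient over the auxiliary variable, $g(x)=\int_{-\infty}^{\infty}(\nabla\widetilde\Gamma_{(0,x_0)}(t,x))_\ta\,dt$, to produce $\nabla\Gamma_{x_0}$ on $\R^n$; the estimate \eqref{eq:keyestimateofgradfund} and the distributional identity then follow from the decay in $t$ and a cutoff argument, with a separate De Giorgi--Nash decay argument needed to make the $t$-integral converge when $n=2$. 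If you want to salvage your outline, you would have to replace the freezing/Duhamel step by this kind of dimension augmentation (or by an argument with Carleson control of the $t$-dependence), since $L_\infty$ perturbation alone does not control the evolution in a distinguished direction.
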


From the gradient estimate \eqref{eq:keyestimateofgradfund}, we deduce point wise estimates of 
$\Gamma_x$ in Section~\ref{sec:tdep}. This section also contains the proof of Theorem~\ref{thm:2}, 
which builds on the proof of Theorem~\ref{thm:1}, which is in Section~\ref{sec:fundindep}.
Sections~\ref{sec:funccalc}, \ref{sec:GreenHalf} and \ref{sec:GreenLip} contains the details of the construction of the fundamental solution for $t$-independent coefficients, which uses the Green's formula from Definition~\ref{def:fundsolpm}. Half of this identity yields the representation formula \eqref{eq:reprofdlp} for the double layer potential operator. By a duality argument we also derive corresponding results for the gradient of the single layer potential operator in Section~\ref{sec:singlelayer}.

\section{Functional calculus for divergence form equations}   \label{sec:funccalc}

In this section, we explain the method of functional calculus (F) for the $L_2$ Dirichlet problem
for the equation $\divv A \nabla u=0$ in $\R^{1+n}_+$.
We assume in this section that the coefficients $A\in L_\infty(\R^{1+n};\mL(\C^{m(1+n)}))$
are $t$-independent and accretive in the sense of \eqref{eq:accre}.
  
  Recall from complex analysis the following two relations between harmonic functions and analytic functions in $\C=\R^2$: (a) $u$ is harmonic if and only if $f=\nabla u$ is anti-analytic, that is divergence- and curl-free, and (b) 
$u$ is harmonic if and only if there exists an analytic function $v$ with $\re v=u$.
In this section, we generalize this result to solutions to $\divv A \nabla u=0$ in $\R^{1+n}$, following \cite{AAM, AA1}.
Following the notation from these papers, we shall suppress indices $i,j=1,\ldots, m$ in this section.

(a)
If $\divv A\nabla u=0$, write $f= [f_\no, f_\ta]^t := [\pd_{\nu_A} u, \nabla_\ta u]^t$, where 
 $[a, b]^t:= \begin{bmatrix}a \\ b \end{bmatrix}$.
Decomposing the matrix $A$ as
$$
  A(x) = \begin{bmatrix} A_{\no\no}(x) & A_{\no\ta}(x) \\ A_{\ta\no}(x) & A_{\ta\ta}(x)  \end{bmatrix},
$$
we have the conormal derivative $\pd_{\nu_A} u := A_{\no\no} \pd_t u+ A_{\no\ta}\nabla_\ta u$,
or inversely $\pd_t u= A_{\no\no}^{-1}(f_\no-A_{\no\ta} f_\ta)$.
In terms of $f$, the equation for $u$ becomes
$$
  \pd_t f_\no + \divv_\ta \Big( A_{\ta\no}A_{\no\no}^{-1}(f_\no-A_{\no\ta} f_\ta) + A_{\ta\ta}f_\ta  )  \Big)=0.
$$
The condition that $f$ is the conormal gradient of a function $u$, determined up to constants,
can be expressed as the curl-free condition
$$
  \begin{cases}
    \pd_t f_\ta = \nabla_\ta \Big( A_{\no\no}^{-1}(f_\no-A_{\no\ta} f_\ta) \Big), \\
    \curl_\ta f_\ta=0.
  \end{cases}
$$ 
In vector notation, we equivalently have 
$$
  \pd_t  \begin{bmatrix}f_\no \\ f_\ta \end{bmatrix}+
  \begin{bmatrix} 0 & \divv_\ta \\ -\nabla_\ta & 0  \end{bmatrix}
   \begin{bmatrix} A_{\no\no}^{-1} & -A_{\no\no}^{-1}A_{\no\ta} \\ A_{\ta\no}A_{\no\no}^{-1} & A_{\ta\ta}- A_{\ta\no}A_{\no\no}^{-1}A_{\no\ta}  \end{bmatrix} \begin{bmatrix}f_\no \\ f_\ta \end{bmatrix}=0,
$$
together with the constraint $\curl_\ta f_\ta=0$.
Define
$$
  D:=  \begin{bmatrix} 0 & \divv_\ta \\ -\nabla_\ta & 0  \end{bmatrix}
  \qquad \text{and}\qquad 
  B:= \begin{bmatrix} A_{\no\no}^{-1} & -A_{\no\no}^{-1}A_{\no\ta} \\ A_{\ta\no}A_{\no\no}^{-1} & A_{\ta\ta}- A_{\ta\no}A_{\no\no}^{-1}A_{\no\ta}  \end{bmatrix},
$$
so that the equation becomes 
\begin{equation}   \label{eq:DB}
   \pd_t f+ DB f=0
\end{equation} 
together with the constraint $f\in \mH:= \clos{\ran (D)}= \sett{f\in L_2}{\curl_\ta f_\ta=0}$ for each fixed $t>0$.
(Here and below, $\ran(\cdot)$ and $\nul(\cdot)$ denote range and null space of an operator.)
This equation for $f$, which is an $L_2(\R^n;\C^{m(1+n)})$ vector-valued ODE in $t$, can be viewed as
a generalized Cauchy--Riemann system.

\begin{defn}
The {\em conormal gradient} of $u$ is the vector field 
$$
   \nabla_A u:= \begin{bmatrix} \pd_{\nu_A} u \\ \nabla_\ta u  \end{bmatrix},
$$ 
where $\pd_{\nu_A} u= (A\nabla u)_\no$ is the (inward relative $\R^{1+n}_+$) conormal derivative.
\end{defn}

(b)
Another Cauchy--Riemann type system related to $\divv A\nabla u=0$ is
$$
  \pd_t v+ BD v=0,
$$
where $D$ and $B$ have been swapped.
Applying $D$ to this equation yields $(\pd_t+ DB)(Dv)=0$,
so 
$$
  f:= Dv = [\divv_\ta v_\ta, -\nabla_\ta v_\no]
$$
is the conormal gradient of a solution $u$ to $\divv A\nabla u=0$.
Looking at $f_\ta$, we see that we should set
$$
  u:= -v_\no.
$$
Then $\nabla_\ta u= f_\ta$. Moreover
$$
  \pd_t u= -\pd_t v_\no= (BDv)_\no= (Bf)_\no= A_{\no\no}^{-1}(f_\no-A_{\no\ta}\nabla_\ta u),
$$
so that $\pd_{\nu_A}u= f_\no$.
Thus the equation
\begin{equation}   \label{eq:BD}
   \pd_t v+ BD v=0
\end{equation} 
for $v= [-u, v_\ta]^t$ implies that $u$ solves 
$\divv A\nabla u=0$. The vector-valued function $v_\ta$ can be viewed as as a set of generalized conjugate functions to $u$.

\begin{defn}
  A {\em conjugate system} for $u$ is a vector field $v$ solving $\pd_t v+BDv=0$ such that
$$
   u= -v_\no.
$$
\end{defn}

We now consider the closed and unbounded operators $DB$ and $BD$ in the Hilbert space 
$L_2= L_2(\R^n;\C^{m(1+n)})$. 
Here $D$ is a non-injective (if $n\ge 2$) self-adjoint operator with
$$
   \clos{\ran(D)}=\mH\qquad\text{and}\qquad \nul(D)= \mH^\perp,
$$
whereas $B$ is a bounded and accretive multiplication operator just like $A$.
Indeed, in \cite{AAM} it was noted that the transform
  $$
   A=\begin{bmatrix} A_{\no\no} & A_{\no\ta} \\ A_{\ta\no} & A_{\ta\ta}  \end{bmatrix}\mapsto
   \hat A:= \begin{bmatrix} A_{\no\no}^{-1} & -A_{\no\no}^{-1}A_{\no\ta} \\ A_{\ta\no}A_{\no\no}^{-1} & A_{\ta\ta}- A_{\ta\no}A_{\no\no}^{-1}A_{\no\ta}  \end{bmatrix}
  $$
  has the following properties.
\begin{itemize}
\item[{\rm (i)}] If $A$ is accretive, then so is $\hat A$.
\item[{\rm (ii)}]  If $\hat A=B$, then $\widehat B= A$.
\item[{\rm (iii)}] If $\hat A=B$, then $\widehat{A^*}= NB^* N$, where
$N:= \begin{bmatrix} -I & 0 \\ 0 & I\end{bmatrix}$ is the reflection operator for vectors across $\R^n$.
\end{itemize}

As $B$ is bounded and accretive, we have
$$
   \omega:=\sup_{f\in \mH\setminus\{0\}} |\arg(Bf, f)]<\pi/2.
$$
The operators $DB$ and $BD$ both have spectrum contained in the double sector
$$
  S_{\omega-}\cup \{0\}\cup S_{\omega+},
$$
where $S_{\omega+}= \sett{\lambda\in\C\setminus\{0\}}{|\arg \lambda|\le \omega}$ and $S_{\omega-}:= -S_{\omega+}$.
There are decompositions of $L_2$ into closed complementary (but in general non-orthogonal) spectral subspaces associated with these three parts of the spectrum. For $DB$ we have
$$
  L_2= E^-_A L_2\oplus B^{-1}\mH^\perp \oplus  E_A^+ L_2
$$
and for $BD$ we have
$$
L_2 = \tE^-_A L_2\oplus \mH^\perp\oplus  \tE_A^+ L_2.
$$
Note that for $DB$ we have $\clos{\ran(DB)}= \mH= E^-_A L_2 \oplus  E_A^+ L_2$ and $\nul(DB)= B^{-1}\mH^\perp$, whereas for $BD$ we have
$\clos{\ran(BD)}=B\mH=  \tE^-_A L_2\oplus  \tE_A^+ L_2$ and $\nul(BD)= \mH^\perp$.
The proof of the fact that the the projections $E^\pm_A$ and $\tE_A^\pm$ associated with these splittings are bounded uses harmonic analysis from the solution of the Kato square root problem.

Important in this paper are the following intertwining and duality relations.

\begin{prop}    \label{prop:dualityandintertw}
We have well-defined isomorphisms 
$$
  B: E^\pm_A L_2\to \tE^\pm_A L_2,
$$
and closed and injective maps with dense domain and range
$$
  D: \tE^\pm_A L_2\to E^\pm_A L_2.
$$
We also have a duality
$$
   ( E_{A^*}^\mp, N \tE_A^\pm ),
$$
that is the map $E_{A^*}^\mp L_2\to (\tE_A^\pm L_2)^*$, mapping $g\in E_{A^*}^\mp L_2$
to the functional $\tE_A^\pm L_2\ni f\mapsto (g, Nf)\in \C$, is an isomorphism.
\end{prop}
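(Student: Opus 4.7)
The plan is to leverage the bounded $H^\infty$ functional calculus of the bisectorial operators $DB$ and $BD$ on $L_2$, recalled from \cite{AAM, AHLMcT} as the main analytic input, together with two elementary resolvent intertwinings. Writing $R_\lambda := (\lambda - DB)^{-1}$ and $\widetilde R_\lambda := (\lambda - BD)^{-1}$ for $\lambda \notin S_{\omega-}\cup\{0\}\cup S_{\omega+}$, multiplication of $(\lambda - DB)R_\lambda = I$ on the left by $B$, or on the right by $D$, yields
\[
  B R_\lambda = \widetilde R_\lambda B \quad \text{on } L_2, \qquad D \widetilde R_\lambda = R_\lambda D \quad \text{on } \dom(D).
\]
Integrating these along small contours surrounding $S_{\omega\pm}$ upgrades them to the bounded operator intertwinings $B E^\pm_A = \tE^\pm_A B$ and $D \tE^\pm_A = E^\pm_A D$ on $\dom(D)$, which drive the entire argument.

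The first claim is then immediate. Accretivity gives $\|Bh\|\,\|h\| \ge |(Bh,h)| \ge \re(Bh,h) \ge \kappa\|h\|^2$, so $\|Bh\| \ge \kappa\|h\|$ for $h \in \mH$, showing $B$ is injective on $E^\pm_A L_2 \subset \mH$. The intertwining $BE^\pm_A = \tE^\pm_A B$ sends $E^\pm_A L_2$ into $\tE^\pm_A L_2$, and for surjectivity any $g \in \tE^\pm_A L_2 \subset B\mH$ has the form $g = Bh$ with $h \in \mH$, whereupon $g = \tE^\pm_A g = \tE^\pm_A Bh = BE^\pm_A h$ forces $h = E^\pm_A h \in E^\pm_A L_2$ by the injectivity just obtained.

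For the second claim, $\dom(D)\cap\tE^\pm_A L_2$ is dense in $\tE^\pm_A L_2$ because $\tE^\pm_A$ commutes with $\widetilde R_\lambda$ and $\ran(\widetilde R_\lambda) \subset \dom(BD) = \dom(D)$. The intertwining $D\tE^\pm_A = E^\pm_A D$ then delivers a closed, densely defined map $D : \tE^\pm_A L_2 \to E^\pm_A L_2$. Injectivity reduces to $B\mH \cap \mH^\perp = \{0\}$, which is immediate from \eqref{eq:accre}: if $Bh \in \mH^\perp$ and $h \in \mH$, then $\re(Bh,h) = 0$ forces $h = 0$. Density of range follows by composing with the isomorphism from the first claim: $D(\tE^\pm_A L_2 \cap \dom D) = DB(E^\pm_A L_2 \cap \dom(DB))$, and $DB$ has dense range on $E^\pm_A L_2$ since its spectrum on that invariant subspace sits in $S_{\omega\pm}$, in particular away from $0$.

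For the duality, the key computation is $(\tE^\pm_A)^* = N E^\mp_{A^*} N$. Since $B$ is bounded and $D$ self-adjoint, $(BD)^* = DB^*$, and combining property (iii), $NB^*N = \widehat{A^*}$, with the elementary identity $NDN = -D$ (verified directly from the matrix form of $D$), one has
\[
  N(DB^*)N = (NDN)(NB^*N) = -D\,\widehat{A^*}.
\]
Functional calculus through the similarity, together with $\chi_{S_{\omega+}}(-\lambda) = \chi_{S_{\omega-}}(\lambda)$, then gives $(\tE^+_A)^* = \chi_{S_{\omega+}}(DB^*) = N \chi_{S_{\omega-}}(D\widehat{A^*}) N = NE^-_{A^*}N$, and symmetrically $(\tE^-_A)^* = NE^+_{A^*}N$. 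The standard duality of direct-sum decompositions of $L_2$ now identifies $(\tE^\pm_A L_2)^*$ with $\ran((\tE^\pm_A)^*) = NE^\mp_{A^*}L_2$ under the restriction of the $L_2$ pairing, and writing an element of this subspace as $Ng$ with $g \in E^\mp_{A^*}L_2$ converts the pairing into $(Ng, f) = (g, Nf)$ via $N^* = N$, which is exactly the stated isomorphism. The main obstacle I anticipate is the rigorous passage from the resolvent intertwinings to the projection identities $BE^\pm_A = \tE^\pm_A B$ and $(\tE^\pm_A)^* = NE^\mp_{A^*}N$: the indicator $\chi_{S_{\omega\pm}}$ is not in $H^\infty$ on any bisector containing $0$, so these equalities must be obtained either by regularising $\chi_{S_{\omega\pm}}$ in the bounded $H^\infty$ calculus or by direct Riesz-type contour integrals of the resolvents, coupled with separate handling of the kernel projection (onto $B^{-1}\mH^\perp$, respectively $\mH^\perp$) already displayed in the decompositions preceding the proposition — routine given the calculus of \cite{AAM}, but requiring care.
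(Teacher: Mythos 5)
Your proof is correct and follows essentially the same route as the paper: the intertwinings come from the associativity identities $B(DB)=(BD)B$ and $D(BD)=(DB)D$ (which you express at the resolvent level), and the duality from the identity $N(DB^*)N=-D\widehat{A^*}$, which is exactly the paper's computation $(g,N(BD)f)=((-D\widehat{A^*})g,Nf)$; you simply supply the routine details (accretivity for injectivity of $B$ on $\mH$, $B\mH\cap\mH^\perp=\{0\}$, the adjoint identity $(\tE^\pm_A)^*=NE^\mp_{A^*}N$) that the paper's three-line proof leaves implicit. One small slip: the spectrum of $DB|_{E^\pm_A L_2}$ in general does contain $0$ as a boundary point (the restriction is injective but not boundedly invertible), so dense range should not be justified by the spectrum being ``away from $0$''; instead use that $\ran(DB)$ is dense in $\mH$ and that $E^\pm_A$ commutes with $DB$, so $E^\pm_A\ran(DB)$ is dense in $E^\pm_A L_2$, which yields the same conclusion.
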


\begin{proof}
  The intertwining by $B$ is a consequence of associativity $B(DB)= (BD)B$, 
  the intertwining by $D$ is a consequence of associativity $D(BD)= (DB)D$,
  and the duality is a consequence of the duality
$$
  (g, N(BD)f)= (g, -(NBN)DNf)= ((-D\widehat {A^*})g, Nf).
$$
\end{proof}

To solve Equation~\eqref{eq:DB} for $f\in \mH$, we note that $DB$ restricts to an operator 
in $E_A^\pm L_2$ with spectrum
$$
  \sigma(DB|_{E_A^\pm L_2})\subset S_{\omega\pm}.
$$
Thus $e^{-tDB}f$ is well defined for $f\in E^+_AL_2$ if $t\ge 0$ and for $f\in E^-_AL_2$ if $t\le 0$.

The following result was proved in \cite{AA1}.
Here the modified non-tangential maximal function of a function $f$ in $\R^{1+n}_\pm$ is the function
$\tN f$ on $\R^n$ defined by
$$
  \tN f(x):= \sup_{\pm t>0} |t|^{-(1+n)/2}\|f\|_{L_2(W(t,x))}, \qquad x\in \R^n,
$$
where the Whitney regions are $W(t,x):= \sett{(s,y)}{c_0^{-1}<s/t< c_0, |y-x|<c_1 |t|}$, for some fixed constants $c_0>1, c_1>0$.
Also, here and below, we write $f_t(x):= f(t,x)$.

\begin{prop}    \label{prop:Xsol}
  Let $f_0\in E_A^\pm L_2$ and define
$$
  f(t,x):= (e^{-tDB}f_0)(x), \qquad \pm t>0, x\in \R^n.
$$
Then 
\begin{itemize}
\item[{\rm (i)}]  $f= [\pd_{\nu_A}u, \nabla_x u]^t$ for a weak solution $u$ to $\divv A \nabla u=0$ in $\R^{1+n}_\pm$, unique up to constants,
\item[{\rm (ii)}] $\pm(0,\infty)\ni t \mapsto f_t\in L_2$ is continuous, with $\lim_{t\to 0^\pm} f_t= f_0$ and $\lim_{t\to \pm\infty} f_t=0$ in $L_2$ sense, and
\item[{\rm (iii)}] we have estimates
$$
  \|f_0\|^2_2\approx \sup_{\pm t>0} \|f_t\|^2_2 \approx \iint_{\R^{1+n}_\pm}|\pd_t f(t,x)|^2 t dtdx\approx
  \int_{\R^n} |\tN(f)|^2 dx.
$$
\end{itemize}
Conversely, if $u$ is any weak solution to $\divv A \nabla u=0$ in $\R^{1+n}_\pm$, with estimate
$\|\tN(f)\|_2<\infty$, or $\sup_{\pm t>0} \|f_t\|_2<\infty$, of the conormal gradient $f=[\pd_{\nu_A}u, \nabla_x u]$, then there exists $f_0\in E_A^\pm L_2$ such that $f(t,x)= (e^{-tDB}f_0)(x)$ almost everywhere in $\R^{1+n}_\pm$. 
\end{prop}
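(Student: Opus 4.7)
The plan is to treat the forward and converse directions separately, leaning on the bounded holomorphic functional calculus of $DB$ on $\mH$ together with the spectral splitting described above. For the forward direction and (i), I would first note that since $E_A^+L_2 \subset \mH = \clos{\ran(D)}$, the restriction of $DB$ to $E_A^+L_2$ has spectrum contained in the proper subsector $S_{\omega+}$ (with $\omega<\pi/2$), so $\{e^{-tDB}\}_{t\geq 0}$ acts as a bounded holomorphic semigroup on this invariant subspace. Consequently $f_t = e^{-tDB}f_0$ satisfies $\pd_t f + DB f = 0$ strongly in $L_2$ for $t>0$ and lies in $\mH$, so $\curl_\ta (f_t)_\ta = 0$. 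One then picks $u(t,x)$ with $\nabla_\ta u(t,\cdot) = (f_t)_\ta$, determined up to a function of $t$, which I fix by imposing $\pd_t u = (Bf_t)_\no$. The tangential row of $\pd_t f + DBf = 0$ reads $\pd_t f_\ta = \nabla_\ta (Bf)_\no$, i.e.\ exactly the compatibility $\pd_t \nabla_\ta u = \nabla_\ta \pd_t u$ needed for $u$ to be well defined; the normal row then becomes $\divv A\nabla u = 0$ in distributional sense, and $f = \nabla_A u$ by construction. Uniqueness of $u$ up to a constant is immediate from $\nabla u$ being determined by $f$.

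For (ii), continuity of $t \mapsto f_t$ on $(0,\infty)$ and the strong limit $f_t \to f_0$ at $t=0^+$ are standard features of bounded holomorphic semigroups. Decay at infinity follows because $DB|_{E_A^+L_2}$ has spectrum in $S_{\omega+}$ bounded away from $0$ in argument, so dominated convergence in the functional calculus gives $e^{-tDB}f_0 \to 0$ strongly. For (iii), the bound $\sup_{t>0}\|f_t\|_2 \approx \|f_0\|_2$ is semigroup boundedness together with the $t=0^+$ limit. The square function equivalence
\[
  \iint_{\R^{1+n}_+} |\pd_t f(t,x)|^2\, t\, dt\, dx \approx \|f_0\|_2^2
\]
reduces, via $\pd_t f_t = -DB e^{-tDB} f_0$, to the quadratic estimate $\int_0^\infty \|\psi(tDB)g\|_2^2\, dt/t \approx \|g\|_2^2$ with $\psi(z)=ze^{-z}$ for $g\in \clos{\ran(DB)}$, which is the bounded holomorphic functional calculus of $DB$ on $\mH$ from \cite{AAM}. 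The equivalence with $\|\tN f\|_2$ is more delicate: the upper bound is obtained from $L_2$ off-diagonal (Gaffney) estimates for the resolvents $(I + itDB)^{-1}$, via a Whitney averaging that converts the square function control into pointwise-in-$t$ Whitney averages; the lower bound uses a simple Poincar\'e-type inequality in each Whitney box.

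The converse direction is where I expect the main obstacle. Given $u$ a weak solution with $\|\tN f\|_2 < \infty$ (respectively $\sup_{t>0}\|f_t\|_2<\infty$) for $f = \nabla_A u$, one verifies from $\divv A\nabla u = 0$ that $\pd_t f + DBf = 0$ holds weakly, with $f_t \in \mH$ for a.e.\ $t$. Using the topological splitting $L_2 = E_A^-L_2 \oplus B^{-1}\mH^\perp \oplus E_A^+L_2$, I decompose $f_t = f_t^- + f_t^0 + f_t^+$ with respect to the three spectral parts. The ODE propagates each piece independently, yielding $f_t^\pm = e^{-tDB}f_0^\pm$ and $f_t^0$ constant in $t$; membership $f_t\in \mH$ kills $f_t^0$, and the assumed boundedness/finite non-tangential norm as $t\to+\infty$ forces $f_0^- = 0$ (since $e^{-tDB}$ grows exponentially on $E_A^-L_2$ as $t\to+\infty$). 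The technical work lies in making this rigorous when $u$ is only a weak solution with no a priori strong trace at $t=0$: one approximates by $f_{t+\epsilon}$ as $\epsilon\to 0^+$, extracts a weak $L_2$ subsequential limit $f_0$, uses interior Caccioppoli estimates to promote the boundedness assumption into control of the full semigroup orbit, and identifies the limit with $E_A^+ f_0$ by the uniqueness of the spectral decomposition. These arguments are carried out in detail in \cite{AA1}.
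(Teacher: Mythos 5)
The paper gives no proof of this proposition at all---it simply cites \cite{AA1}---and your outline reproduces the argument carried out there: holomorphic functional calculus and quadratic estimates for $DB$ on $\mH$ for (i)--(iii), off-diagonal estimates plus Whitney averaging for the non-tangential maximal bound, and the three-part spectral splitting with ODE propagation and a weak-limit trace argument for the converse. So your proposal is essentially the same approach, correct in outline (modulo sketch-level looseness, e.g.\ the heuristic that $e^{-tDB}$ ``grows exponentially'' on $E_A^-L_2$, where the rigorous argument instead propagates the $E_A^-$ component backwards from large $t$ and uses decay at infinity to kill it).
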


Similar results apply to Equation~\eqref{eq:BD}.
The following result was proved in \cite{AA1}.

\begin{prop}    \label{prop:Ysol}
  Let $v_0\in \tE_A^\pm L_2$ and define
$$
  v(t,x):= (e^{-tBD}v_0)(x), \qquad \pm t>0, x\in \R^n.
$$
Then 
\begin{itemize}
\item[{\rm (i)}]  $u:= -v_\no$ is a weak solution $u$ to $\divv A \nabla u=0$ in $\R^{1+n}_\pm$,
\item[{\rm (ii)}] $\pm(0,\infty)\ni t\mapsto v_t\in L_2$ is continuous, with $\lim_{t\to 0^\pm} v_t= v_0$ and $\lim_{t\to \pm\infty} v_t=0$ in $L_2$ sense, and
\item[{\rm (iii)}] we have estimates
$$
  \|v_0\|^2_2\approx \sup_{\pm t>0} \|v_t\|^2_2 \approx \iint_{\R^{1+n}_\pm}|\nabla u(t,x)|^2 t dtdx\approx
  \int_{\R^n} |\tN(v)|^2 dx.
$$
\end{itemize}
Conversely, if $u$ is any weak solution to $\divv A\nabla u=0$ in $\R^{1+n}_\pm$, with estimate
$\iint_{\R^{1+n}_\pm}|\nabla u(t,x)|^2 t dtdx <\infty$, then there exists $v_0\in \tE_A^\pm L_2$ and a  constant $c\in \C^m$ such that $u(t,x)= -(e^{-tBD}v_0)_\perp(x)+c$ almost everywhere in $\R^{1+n}_\pm$. 
\end{prop}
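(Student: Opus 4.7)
The plan is to reduce Proposition~\ref{prop:Ysol} to Proposition~\ref{prop:Xsol} via the operator intertwining $D(BD) = (DB)D$ recorded in Proposition~\ref{prop:dualityandintertw}, combined with the explicit computation in part~(b) of the preceding discussion showing how any solution of $\pd_t v + BDv = 0$ generates a solution of $\divv A \nabla u = 0$ through $u = -v_\no$.

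\emph{Forward direction.}\ Fix $v_0 \in \tE_A^\pm L_2$ and set $v(t,\cdot) := e^{-tBD}v_0$. Since $\tE_A^\pm L_2 \subset \clos{\ran(BD)}$, the holomorphic semigroup lands in $\dom(BD) \subset \dom(D)$ for each $\pm t > 0$, and applying $D$ to $\pd_t v + BDv = 0$ gives $\pd_t(Dv) + DB(Dv) = 0$ with $Dv(t,\cdot) \in E_A^\pm L_2$ by the intertwining. By Proposition~\ref{prop:Xsol}, $Dv$ is the conormal gradient $\nabla_A u$ of a weak solution $u$ to $\divv A\nabla u = 0$, unique up to a constant. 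The computation from part~(b) shows $\nabla_\ta(-v_\no) = -\nabla_\ta v_\no = (Dv)_\ta$ and $\pd_{\nu_A}(-v_\no) = (Dv)_\no$, so $u := -v_\no$ is the correct choice, giving~(i). The continuity, initial limit, and decay at $\pm\infty$ in~(ii) are standard consequences of the bounded holomorphic functional calculus of $BD$ restricted to its spectral subspace $\tE_A^\pm L_2$, where the spectrum lies in $S_{\omega\pm}$.

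For~(iii), uniform boundedness $\sup_{\pm t>0}\|v_t\|_2 \approx \|v_0\|_2$ is just boundedness of the semigroup. The square-function equivalence $\iint_{\R^{1+n}_\pm}|\nabla u|^2 |t|\,dt\,dx \approx \|v_0\|_2^2$ combines two inputs: from $\pd_t v = -BDv$ and accretivity of $B$ on the range of $D$, one obtains at each fixed $t$ the $L_2$-norm equivalence $\|\pd_t v_t\|_2 \approx \|Dv_t\|_2 = \|\nabla_A u_t\|_2 \approx \|\nabla u_t\|_2$; and the Kato-type quadratic estimate $\int_0^\infty \|tBDe^{-t|BD|}v_0\|_2^2 \,dt/t \approx \|v_0\|_2^2$ from \cite{AHLMcT} converts to $\iint|\pd_t v|^2 |t|\,dt\,dx \approx \|v_0\|_2^2$. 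The non-tangential estimate $\|\tN v\|_2 \approx \|v_0\|_2$ then follows by a Whitney/Poincar\'e-type averaging that upgrades the tent-space bound to a pointwise-in-$t$ bound on Whitney regions, exactly as in Proposition~\ref{prop:Xsol}.

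\emph{Converse.}\ Given a weak solution $u$ with $\iint_{\R^{1+n}_\pm}|\nabla u|^2|t|\,dt\,dx < \infty$, the strategy is to set $u =: -v_\no$ and construct a tangential conjugate $v_\ta$ by solving $\divv_\ta v_\ta = \pd_{\nu_A}u$, which is the transversal component of $\pd_t v + BDv = 0$; the remaining $\mH^\perp$-ambiguity in $v_\ta$ only shifts $u$ by the additive constant $c \in \C^m$. One then identifies $v$ with $e^{-tBD}v_0$ for the appropriate $v_0 \in \tE_A^\pm L_2$. The main obstacle, as I see it, is that the hypothesis is only a tent-space bound and does not directly furnish an $L_2$-trace at $t = 0$: running the square function equivalence of~(iii) backwards, one has to use the finiteness of the tent-space norm together with the chosen half-space of validity to rule out any component of the boundary datum in $\tE_A^\mp L_2$ (which would grow, rather than decay, as $t \to \pm\infty$) and to absorb the $\mH^\perp$-component into $c$.
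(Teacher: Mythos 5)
The paper itself offers no proof of Proposition~\ref{prop:Ysol}: it is quoted from \cite{AA1}, so your proposal has to be measured against that cited argument. Your forward direction follows the same natural route (the intertwining $D(BD)=(DB)D$, reduction to Proposition~\ref{prop:Xsol}, accretivity of $B$ on $\mH$, and the quadratic estimates), and is essentially right in outline, with two caveats. First, since $D$ is unbounded on $\tE_A^\pm L_2$, $Dv_0$ need not exist, so Proposition~\ref{prop:Xsol} must be applied on shifted half-spaces $\pm t>s>0$ and then $s\to 0$; harmless for (i), but it should be said. More substantively, the bound $\|\tN(v)\|_2\lesssim\|v_0\|_2$ does \emph{not} follow ``exactly as in Proposition~\ref{prop:Xsol}'': that proposition controls $\tN$ of the conormal gradient, whereas here one needs Whitney $L_2$ averages of the potential-level function $v$ itself, including the conjugate part $v_\ta$. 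Your Poincar\'e remark does not supply this, because $Dv=[\divv_\ta v_\ta,-\nabla_\ta v_\no]^t$ controls neither the full gradient of $v_\ta$ nor $v$ itself; in \cite{AA1} this non-tangential estimate is a separate, genuinely nontrivial argument using off-diagonal decay of the semigroup.

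The real gap is the converse. You correctly identify the obstacle --- a weak solution with only $\iint_{\R^{1+n}_\pm}|\nabla u|^2\,|t|\,dtdx<\infty$ has no a priori $L_2$ trace --- but you do not overcome it, and the construction you sketch (define $v_\ta$ by solving $\divv_\ta v_\ta=\pd_{\nu_A}u$) does not yield what is needed: it only matches the normal component of $\pd_t v+BDv=0$, and nothing in the sketch shows that the tangential component $\pd_t v_\ta+(BDv)_\ta=0$ holds, that $v_t$ has an $L_2$ limit $v_0$ as $t\to 0^\pm$, that $v_0$ lies in $\tE_A^\pm L_2$ (ruling out the opposite spectral part), or that the residual null-space/$\mH^\perp$ ambiguity is exactly a constant $c\in\C^m$ at the level of $u$. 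In \cite{AA1} this converse is a substantial piece of work: one first establishes a semigroup representation for the conormal gradient on interior half-spaces from the square-function bound, then integrates up to the potential level and runs a limiting argument to extract the trace and identify its spectral subspace. As written, the converse half of the proposition is asserted rather than proved, so the proposal is incomplete on precisely the part that cannot be reduced to Proposition~\ref{prop:Xsol}.
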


\section{Green's formula on the half space}    \label{sec:GreenHalf}

Recall that for the Laplace operator, that is the special case $A=I$ and $m=1$, we have the fundamental solution
$$
  \Phi(t,x)= 
  \begin{cases}
     \frac {-1}{(n-1)\sigma_n} \big(t^2+|x|^2\big)^{-(n-1)/2},  & n\ge 2, \\
     \frac 1{2\pi}\ln\sqrt{t^2+x^2}, & n=1,
  \end{cases}
$$
with pole $(0,0)$, where $\sigma_n$ denotes the area of the unit sphere in $\R^{1+n}$.
We note that 
$$
 \nabla\Phi(t,x)=  \frac 1{\sigma_n}
  \frac{(t,x)}{(t^2+|x|^2)^{(n+1)/2}},
$$
for $n\ge 1$.

In this section, we construct a fundamental solution to more general divergence form operators
$\divv A \nabla$ using functional calculus.
We assume in Sections~\ref{sec:GreenHalf}, \ref{sec:GreenLip} and \ref{sec:fundindep} that the coefficients $A\in L_\infty(\R^{1+n};\mL(\C^{m(1+n)}))$
are $t$-independent, accretive in the sense of \eqref{eq:accre} and that solutions to 
$\divv A\nabla u=0$ satisfy the Moser local boundedness estimate \eqref{eq:Mos}.

To explain the definition, we start with the following formal calculation.
Assume that $\Gamma= (\Gamma^{ij}_{(t_0,x_0)}(t,x))_{i,j=1}^m$ is a fundamental solution to $\divv A^* \nabla$, that is 
$$
   \divv (A^{ki})^* \nabla \Gamma^{kj}_{(t_0,x_0)}=\begin{cases} \delta_{(t_0, x_0)},& i=j, \\ 0, & i\ne j.\end{cases}
$$
Assume that $t_0>0$ and that $u$ solves $\divv A \nabla u=0$ in $\R^{1+n}_+$.
With appropriate estimates of $u$ and $\Gamma^*$, Green's formula shows that
$$
   u^i(t_0,x_0)= \int_{\R^n} \Big( \big( \Gamma^{ji}_{(t_0,x_0)}(0,x), \pd_{\nu_A} u^j(0,x) \big)- \big( \pd_{\nu_{A^*}}\Gamma^{ji}_{(t_0,x_0)}(0,x), u^j(0,x)\big)  \Big) dx,
$$
where the conormal derivative is $\pd_{\nu_A}u^j=  (A^{jk}\nabla u^k)_\no$.
Now let $v$ be a conjugate system for $u$ so that $u^j=-v^j_\no$ and $\pd_{\nu_A}u^j=\divv_\ta v^j_\ta$.
Then by integration by parts, we obtain
$$
u^i(t_0,x_0)= -\int_{\R^n} \big(\nabla_{A^*}\Gamma^{ji}_{(t_0,x_0)}(0,x), N v^j(0,x)\big) dx,
$$
where the conormal gradient is $\nabla_{A^*}\Gamma^{ji}= [\pd_{\nu_{A^*}}\Gamma^{ji}, \nabla_\ta\Gamma^{ji} ]^t$.
More generally, it follows in this way that if $v_0\in \tE_A^+ L_2$, then
$$
  \int_{\R^n} \big(\nabla_{A^*}\Gamma^{ji}_{(t_0,x_0)}(0,x), N v^j_0(x) \big) dx
  =
  \begin{cases}
     (e^{-t_0 BD}v_0)^i_\no(x_0), & t_0>0, \\
     0, & t_0<0,
  \end{cases}
$$
and if $v_0\in \tE_A^- L_2$, then
$$
  \int_{\R^n} \big(\nabla_{A^*}\Gamma^{ji}_{(t_0,x_0)}(0,x), N v^j_0(x) \big) dx
  =
  \begin{cases}
     0, & t_0>0, \\
     -(e^{-t_0 BD}v_0)^i_\no(x_0), & t_0<0.
  \end{cases}
$$
We now reverse this argument, taking  these four formulae as definition. 
From the Moser local boundedness estimate \eqref{eq:Mos}, it follows that 
$$
  |u(t_0, x_0)|\lesssim \tN u(x) \qquad \text{for } |x-x_0|<c_1|t_0|/2.
$$
Thus
$$
  |(e^{-t_0 BD}v_0)_\no(x_0)|\lesssim |t_0|^{-n/2}\|v_0\|_2,
$$
uniformly for $v_0\in \tE_A^\pm L_2$ and $\pm t_0>0$.
Proposition~\ref{prop:Xsol} and the duality from Proposition~\ref{prop:dualityandintertw} 
enable us to make the following construction.

\begin{defn}   \label{def:fundsolpm}  
  For $(t_0, x_0)\in \R_+^{1+n}$ and $i=1,\ldots, m$, let $\Gamma^i_{(t_0,x_0)}=(\Gamma^{ji}_{(t_0,x_0)})_j$ be the, unique up to constants, weak solution to $\divv A^* \nabla \Gamma^i_{(t_0,x_0)}=0$ in $\R^{1+n}_-$ such that
$$
  \int_{\R^n} \big(\nabla_{A^*} \Gamma^{ji}_{(t_0,x_0)}(0,x), N v^j_0(x) \big) dx= (e^{-t_0 BD}v_0)^i_\no(x_0), 
$$
for all $v_0\in\tE^+_A L_2$.

  For $(t_0, x_0)\in \R_-^{1+n}$ and $i=1,\ldots, m$, let $\Gamma^i_{(t_0,x_0)}=(\Gamma^{ji}_{(t_0,x_0)})_j$ be the, unique up to constants, weak solution to $\divv A^* \nabla \Gamma^i_{(t_0,x_0)}=0$ in $\R^{1+n}_+$ such that
$$
  \int_{\R^n} \big(\nabla_{A^*} \Gamma^{ji}_{(t_0,x_0)}(0,x), N v^j_0(x) \big) dx= -(e^{-t_0 BD}v_0)^i_\no(x_0), 
$$
for all $v_0\in\tE^-_A L_2$.
\end{defn}

Some straightforward observations are the following.

\begin{lem}  \label{lem:translinv}
  For $t_0,a>0$, there is a constant $c\in \C^{m^2}$ such that
$$
  \Gamma_{(t_0,x_0)}(t-a,x)= \Gamma_{(t_0+a,x_0)}(t, x)+c 
$$
for almost all $(t,x)\in \R^{1+n}_-$.
  Similarly, for fixed $t_0<0$, $a>0$, there is a constant $c\in \C^{m^
  2}$ such that
$$
  \Gamma_{(t_0,x_0)}(t+a,x)= \Gamma_{(t_0-a,x_0)}(t, x)+c 
$$
for almost all $(t,x)\in \R^{1+n}_+$.

Furthermore, there are estimates
$\|\nabla \Gamma_{(t_0,x_0)}(t,\cdot)\|_2\lesssim |t-t_0|^{-n/2}$.
\end{lem}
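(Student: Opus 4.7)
\emph{Strategy.} Consider the first identity, with $t_0, a > 0$. Since $A$ is $t$-independent, the shifted function $\tilde\Gamma(t,x) := \Gamma_{(t_0, x_0)}(t-a, x)$ is again a weak solution of $\divv A^* \nabla (\cdot) = 0$ in $\R^{1+n}_-$. By Definition~\ref{def:fundsolpm}, $\Gamma_{(t_0+a, x_0)}$ is characterized, uniquely up to an additive constant, by being such a solution and by a boundary identity against $v_0 \in \tE_A^+ L_2$. It therefore suffices to verify that $\tilde\Gamma$ satisfies the same identity, i.e.
\[
\int_{\R^n} \big(\nabla_{A^*} \Gamma^{ji}_{(t_0, x_0)}(-a, x),\, Nv_0^j(x)\big)\, dx = \big(e^{-(t_0+a)BD} v_0\big)^i_\no(x_0)
\]
for every $v_0 \in \tE_A^+ L_2$. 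The second identity (for $t_0 < 0$) is proved by the same method with the signs and the roles of $\tE_A^\pm L_2$ swapped.

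\emph{Matching the boundary data.} Set $g(t) := \nabla_{A^*}\Gamma^i_{(t_0, x_0)}(t, \cdot)$ for $t \le 0$. Applying Proposition~\ref{prop:Xsol} to the coefficients $A^*$, together with property~(iii) of Section~\ref{sec:funccalc} giving $\widehat{A^*} = NB^*N$, we obtain $g(0) \in E_{A^*}^- L_2$ and $g(t) = e^{-tDNB^*N}g(0)$, whence $g(-a) = e^{a DNB^*N} g(0)$. Transferring the exponential to the adjoint gives
\[
\big(g(-a),\, Nv_0\big) = \big(g(0),\, e^{aNBND} Nv_0\big),
\]
since $(DNB^*N)^* = NBND$. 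A direct computation from the definitions of $D$ and $N$ yields $NDN = -D$, whence $NBND = -NBDN$ and, by induction, $(NBDN)^k = N(BD)^k N$. This gives $e^{aNBND} = e^{-aNBDN} = N e^{-aBD} N$, so
\[
\big(g(-a),\, Nv_0\big) = \big(g(0),\, N e^{-aBD} v_0\big).
\]
Since $e^{-aBD}$ preserves $\tE_A^+ L_2$ (the spectrum of $BD|_{\tE_A^+L_2}$ lies in $S_{\omega+}$), we apply the defining identity for $\Gamma_{(t_0, x_0)}$ with test function $w_0 := e^{-aBD} v_0$ and invoke the semigroup law $e^{-t_0 BD} e^{-aBD} = e^{-(t_0+a)BD}$ to conclude.

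\emph{$L_2$ estimate.} The pointwise bound $|(e^{-t_0 BD} v_0)^i_\no(x_0)| \lesssim t_0^{-n/2}\|v_0\|_2$ derived before Definition~\ref{def:fundsolpm}, combined with the duality $(E_{A^*}^-, N\tE_A^+)$ of Proposition~\ref{prop:dualityandintertw}, yields $\|g(0)\|_2 \lesssim t_0^{-n/2}$, and Proposition~\ref{prop:Xsol}(iii) upgrades this to $\|g(t)\|_2 \lesssim t_0^{-n/2}$ for all $t<0$. This already gives the claimed bound when $|t| \lesssim t_0$. For $|t| \gtrsim t_0$, we exploit the translation identity just established with $a := |t|/2$: it yields $\nabla \Gamma_{(t_0, x_0)}(t, \cdot) = \nabla \Gamma_{(t_0 + |t|/2, x_0)}(-|t|/2, \cdot)$, so applying the preceding estimate with the shifted pole $t_0 + |t|/2$ gives
\[
\|\nabla_{A^*}\Gamma_{(t_0, x_0)}(t, \cdot)\|_2 \lesssim (t_0 + |t|/2)^{-n/2} \approx (t_0 + |t|)^{-n/2} = |t - t_0|^{-n/2}.
\]
Accretivity of $A$ converts $\nabla_{A^*}$ to $\nabla$, and the case $t_0<0$ is symmetric.

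\emph{Main obstacle.} The technical crux is the algebraic identity $(e^{aDNB^*N})^* N = N e^{-aBD}$, which transfers the evolution of the conormal gradient of $\Gamma$ (driven by $D\widehat{A^*} = DNB^*N$ associated with $A^*$) to the $BD$-evolution of the test function $v_0$. It rests on the reflection identities $\widehat{A^*} = NB^*N$ and $NDN = -D$ that intertwine the two Cauchy--Riemann systems of Section~\ref{sec:funccalc}; once these are in hand, the rest of the argument is bookkeeping.
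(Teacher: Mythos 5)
Your argument is correct and is essentially the paper's own proof: both represent the conormal gradient as $e^{-tD\widehat{A^*}}$ applied to the trace, transfer the shift by $a$ onto the test function via the $N$-pairing (turning $e^{aD NB^*N}$ into $Ne^{-aBD}$), use the semigroup law, and conclude from the uniqueness clause of Definition~\ref{def:fundsolpm}, with the $L_2$ bound obtained by dualizing the $|t_0|^{-n/2}$ bound on the functional $v_0\mapsto(e^{-t_0BD}v_0)^i_\no(x_0)$. The only differences are cosmetic: the paper checks the pairing identity at every $t<0$ rather than just at the trace, and your two-case derivation of $\|\nabla\Gamma_{(t_0,x_0)}(t,\cdot)\|_2\lesssim|t-t_0|^{-n/2}$ simply fills in details the paper leaves implicit.
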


\begin{proof}
Fix $t_0,a>0$ and consider the functions $\nabla_{A^*}\Gamma^i_{(t_0,x_0)}(t-a,\cdot)= e^{-(t-a) D\tB}\nabla_{A^*}\Gamma^i_{(t_0,x_0)}(0,\cdot)$ and 
$\nabla_{A^*}\Gamma^i_{(t_0+a,x_0)}(t,\cdot)= e^{-t D\tB}\nabla_{A^*}\Gamma^i_{(t_0+a,x_0)}(0,\cdot)$ in $E^-_{A^*}L_2$, where $\tB:= N B^* N$.
We have
\begin{multline*}
 \int_{\R^n} \big(\nabla_{A^*}\Gamma^{ji}_{(t_0,x_0)}(t-a,x), N v^j_0(x) \big) dx=
  \int_{\R^n} \big(\nabla_{A^*}\Gamma^{ji}_{(t_0,x_0)}(0,\cdot), N e^{(t-a) BD} v^j_0(x) \big) dx\\
 =(e^{-t_0 BD} e^{(t-a)BD} v_0)^i_\no(x_0) 
 = (e^{-(t_0+a) BD}e^{tBD} v_0)^i_\no(x_0) \\ 
 =  \int_{\R^n} \big(\nabla_{A^*}\Gamma^{ji}_{(t_0+a,x_0)}(0,\cdot), N e^{t BD} v^j_0(x) \big) dx=
  \int_{\R^n} \big(\nabla_{A^*}\Gamma^{ji}_{(t_0+a,x_0)}(t,x), N v^j_0(x) \big) dx
\end{multline*}
for all $v_0\in\tE^+_A L_2$, and therefore $\nabla_{A^*}\Gamma^{i}_{(t_0,x_0)}(t-a,x)= \nabla_{A^*}\Gamma^{i}_{(t_0+a,x_0)}(t,x)$.

The proof for $t_0<0$ is similar.
The estimate of $\|\nabla \Gamma_{(t_0,x_0)}(t,\cdot)\|_2$ follows from Proposition~\ref{prop:dualityandintertw} and the bound $|t_0|^{-n/2}$ of the functional $v_0\mapsto (e^{-t_0 BD} v_0)^i_\no(x_0)$.
\end{proof}

Note that the translation invariance from Lemma~\ref{lem:translinv} enables us to define, for
any $(t_0,x_0)\in \R^{1+n}$, a weak solution $\Gamma_{(t_0,x_0)}(t,x)$ to $\divv A^* \nabla\Gamma_{(t_0,x_0)}=0$ in $\sett{(t,x)}{t\ne t_0}$, so that
$$
  \Gamma_{(t_0,x_0)}(t,x)= \Gamma_{(t_0+a,x_0)}(t+a, x).
$$
We shall prove in the following sections that for appropriate choices of constants, this defines a fundamental solution $\Gamma_{(t_0,x_0)}(t,x)$
to $\divv A^* \nabla$ on $\R^{1+n}$, that is that the traces at $t=t_0$ coincide except for a Dirac delta 
distribution at $(t,x)= (t_0,x_0)$.
Note that in this paper, except in Section~\ref{sec:tdep}, we only define the fundamental solution on $\R^{1+n}_+$ modulo constants.

\section{Green's formula on Lipschitz graph domains}  \label{sec:GreenLip}

In this section, we improve the estimate $\|\nabla_{A^*} \Gamma_{(t_0,x_0)}(t,\cdot)\|_2\lesssim |t-t_0|^{-n/2}$ away from $x_0$, and prove the following.

\begin{prop}  \label{prop:siobounds}
  We have for $R\ge 0$ and $t\ne t_0$ the estimate
$$
  \int_{|x-x_0|>R}  |\nabla_{A^*} \Gamma_{(t_0, x_0)}(t,x)|^2 dx\lesssim (R+|t-t_0|)^{-n}.
$$
\end{prop}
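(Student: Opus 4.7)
My plan is to reduce the estimate to the flat $L^2$-bound of Lemma~\ref{lem:translinv} by flattening a Lipschitz graph that coincides with the boundary far from the pole. First, Lemma~\ref{lem:translinv} shows that $\Gamma_{(t_0,x_0)}$ extends as a weak solution of $\divv A^*\nabla=0$ throughout $\{t<t_0\}$, and translation invariance allows me to assume $t=0$, writing $\tau:=t_0>0$. When $R\le \tau$ the target bound $(R+\tau)^{-n}\approx \tau^{-n}$ follows immediately from the global estimate $\|\nabla\Gamma_{(t_0,x_0)}(0,\cdot)\|_2^2\lesssim \tau^{-n}$ in Lemma~\ref{lem:translinv}. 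So the real task is the regime $R>\tau$, where I need to prove
$$ \int_{|x-x_0|>R} |\nabla_{A^*}\Gamma_{(t_0,x_0)}(0,x)|^2\,dx \lesssim R^{-n}.$$

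To achieve this, I would introduce a Lipschitz graph $\Sigma:=\{(g(x),x):x\in\R^n\}$, where $g:\R^n\to(-\infty,0]$ is Lipschitz with $\|\nabla g\|_\infty\le\epsilon$ ($\epsilon$ small, depending only on $\|A\|_\infty$ and $\kappa$), $g(x)=0$ for $|x-x_0|\ge R$, and $|g(x_0)|\approx \epsilon R$. Then $\Sigma$ coincides with $\{t=0\}$ outside $B(x_0,R)$ and lies at distance $\gtrsim R$ below the pole $(\tau,x_0)$. Flatten the region below $\Sigma$ to $\R^{1+n}_-$ via the bi-Lipschitz map $\Phi(t,x):=(t-g(x),x)$, which has unit Jacobian and $t$-independent derivatives. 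Pushing $\divv A^*\nabla$ forward by $\Phi$ yields a divergence-form operator $\divv \tilde A^*\nabla$ on $\R^{1+n}_-$ with new coefficients $\tilde A$ that are again $t$-independent (since both $A$ and $g$ are), bounded, and, for $\epsilon$ small, accretive; Moser local boundedness is preserved since $\Phi$ is bi-Lipschitz with constants close to $1$.

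Applying the fundamental-solution construction of Section~\ref{sec:GreenHalf} to $\divv \tilde A^*\nabla$ with pole at $\Phi(\tau,x_0)$, whose height above the flattened boundary is $\gtrsim R$, uniqueness up to constants forces the resulting $\tilde\Gamma$ to agree with $\Gamma\circ\Phi^{-1}$ modulo a constant. Lemma~\ref{lem:translinv} applied to $\tilde\Gamma$ then gives $\|\nabla\tilde\Gamma(0,\cdot)\|_2^2\lesssim R^{-n}$. Transferring back by $\Phi$, the conormal gradient of $\Gamma$ on $\Sigma$ is related pointwise to that of $\tilde\Gamma$ at $\{t=0\}$ by an $L_\infty$-bounded, invertible matrix determined by $\nabla g$ and the block structure of $A^*$, so
$$\int_\Sigma|\nabla_{A^*}\Gamma|^2\,d\sigma \lesssim R^{-n}.$$
Since $\Sigma$ coincides with $\{t=0\}$ on $\{|x-x_0|\ge R\}$ with $d\sigma=dx$ there, the desired estimate follows.

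The main obstacle will be the transfer through $\Phi$: one must verify (i) that $\tilde A$ remains accretive (an open condition in $\|\nabla g\|_\infty$, which determines $\epsilon$), (ii) that the functional-calculus definition in Definition~\ref{def:fundsolpm} pulls back correctly, so that $\Gamma\circ\Phi^{-1}$ and the $\tilde\Gamma$ built from the flattened semigroup $e^{-\tilde t\tilde B\tilde D}$ agree modulo constants, and (iii) that the relation between conormal gradients across $\Phi$ is governed by a bounded and invertible matrix multiplier preserving $L_2$-norms up to constants depending only on $\epsilon$. These steps are standard in the Lipschitz-graph layer-potential literature, but require careful bookkeeping because $A$ is only $L_\infty$-accretive and the conormal direction on $\Sigma$ mixes the normal and tangential blocks of $A^*$.
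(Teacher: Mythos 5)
Your overall strategy --- flatten along a Lipschitz graph adapted to the scale $R$ and reduce to the flat estimate of Lemma~\ref{lem:translinv} --- is the same as the paper's, but the step you defer as ``standard bookkeeping'' is the actual heart of the proof, and in the form you invoke it, it is not available. You argue that ``uniqueness up to constants forces $\tilde\Gamma$ to agree with $\Gamma\circ\Phi^{-1}$ modulo a constant.'' At this stage of the development, however, $\Gamma_{(t_0,x_0)}$ is not yet known to be a fundamental solution (that is only established in Section~\ref{sec:fundindep}, \emph{using} Proposition~\ref{prop:siobounds}), so there is no uniqueness statement for ``solutions with a given pole'' to appeal to; the only uniqueness at hand is the one in Definition~\ref{def:fundsolpm}, i.e.\ uniqueness modulo constants of the solution satisfying the reproducing identity against all $v_0\in\tE^+_{\tilde A}L_2$. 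Hence you must actually verify that identity for the transplanted function, and that verification is precisely the paper's Proposition~\ref{prop:stokes}: a Stokes/Green argument in the region between the two boundaries applied to the closed form built from $\Lambda^{jk}(\nu_0,\nu)$ in \eqref{eq:oneform}, combined with the duality and intertwining of Proposition~\ref{prop:dualityandintertw}. This is a substantive argument specific to these non-singular-integral operators, not a citation to the Lipschitz-graph layer-potential literature.

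Moreover, your choice of a downward graph ($g\le 0$, $g=0$ outside $B(x_0,R)$, $g(x_0)\approx-\epsilon R$) makes the paper's version of that identification inapplicable as stated and creates an extra difficulty it was designed to avoid. Proposition~\ref{prop:stokes} assumes $\gamma\ge 0$ with $\gamma(x_0)=0$ exactly so that the pullback $u_\sigma(t,x)=u(t+\gamma(x),x)$ of an arbitrary test solution on $\R^{1+n}_+$ is again a solution on a set containing $\R^{1+n}_+$, which is what makes the membership $\nabla_{A_\sigma}u_\sigma(0,\cdot)\in E^+_{A_\sigma}L_2$ accessible. With $g\le 0$ the pullback solves the flattened equation only on $\{t>-g(x)\}\subsetneq\R^{1+n}_+$, so you would have to reorganize the duality (e.g.\ test the defining identity of $\Gamma$ against pushforwards of flattened solutions) and then justify that the resulting conormal gradient trace on $\{t=0\}$ --- which in the flattened picture is a trace on the Lipschitz graph $\{s=-g(x)\}$ touching the boundary --- lies in $E^+_AL_2$ with uniform bounds; this is not automatic and is of the same Rellich/trace nature as the estimate you are trying to prove. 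The paper's configuration avoids all boundary-trace issues: it uses an upward tent of height $R$, so the far portion of $\{t=0\}$ ends up at depth $R$ \emph{inside} the flattened lower half space, and the conclusion follows from the interior slice bound $\|\nabla_{A_\sigma^*}\Gamma_\sigma(-R,\cdot)\|_2\lesssim (R+t_0)^{-n/2}$ of Lemma~\ref{lem:translinv}. If you adopt that tent (or reflect your graph) your outline matches the paper's proof, but the Stokes-type identification must still be proved, not assumed.
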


To prove this, we consider the graph 
$$
  \Sigma= \sett{(\gamma(x), x)}{x\in \R^n}
$$
of a Lipschitz function $\gamma: \R^n\to \R$. We assume $\gamma(x_0)=0$ and
write 
$$
  \sigma:= \nabla_\ta \gamma\in L_\infty(\R^n;\R^n).
$$
Recall the following consequence of the chain rule.

\begin{prop}  
  Let $\Omega\subset\R^{1+n}$ be an open set.
  Then $u$ is a weak solution to $\divv A \nabla u=0$ in 
  $\sett{(t+\gamma(x),x)}{(t,x)\in\Omega}$ if and only if 
  $$
    u_\sigma(t,x):= u(t+\gamma(x), x)
  $$
  is a weak solution to $\divv A_\sigma \nabla u_\sigma=0$ in $\Omega$.
  Here
  $$
    A_\sigma^{ij}:= \begin{bmatrix} 1 & -\sigma^t \\ 0 & I \end{bmatrix} A^{ij}
    \begin{bmatrix} 1 & 0 \\ -\sigma & I \end{bmatrix}
  $$
  has estimates $\|A_\sigma\|_\infty\lesssim (1+\|\sigma\|_\infty^2) \|A\|_\infty$
  and $\kappa_{A_\sigma}\gtrsim \kappa_A/(1+\|\sigma\|_\infty^2)$,
  where $\sigma^t$ denotes the transpose of the column vector $\sigma$.
\end{prop}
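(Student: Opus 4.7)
My plan is to reduce the proposition to the chain rule together with elementary matrix manipulations, with the accretivity estimate being the only delicate point. First, applying the chain rule to $u_\sigma(t,x):=u(t+\gamma(x),x)$ and using $\sigma=\nabla_\ta\gamma$, I get
\begin{equation*}
\nabla u_\sigma(t,x)=K(x)\,\nabla u\bigl(t+\gamma(x),x\bigr),\qquad K(x):=\begin{bmatrix}1 & 0\\ \sigma(x) & I\end{bmatrix},
\end{equation*}
so that $\nabla u(t+\gamma(x),x)=K(x)^{-1}\nabla u_\sigma(t,x)$ with $K^{-1}=\begin{bmatrix}1 & 0\\ -\sigma & I\end{bmatrix}$.

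Second, I would feed this into the weak formulation. The map $\Phi(t,x):=(t+\gamma(x),x)$ is a bi-Lipschitz diffeomorphism of $\Omega$ onto $\{(t+\gamma(x),x):(t,x)\in\Omega\}$ with unit Jacobian, and test functions correspond bijectively via $\varphi\mapsto \varphi_\sigma:=\varphi\circ\Phi$. Using $t$-independence of $A$, the change of variables $(s,y)\leftrightarrow(t,x)$ converts the weak formulation into
\begin{equation*}
\int (A\nabla u,\nabla\varphi)\,ds\,dy=\int (AK^{-1}\nabla u_\sigma,K^{-1}\nabla\varphi_\sigma)\,dt\,dx=\int (A_\sigma \nabla u_\sigma,\nabla\varphi_\sigma)\,dt\,dx,
\end{equation*}
with $A_\sigma:=(K^{-1})^t A K^{-1}$. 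Since $(K^{-1})^t=\begin{bmatrix}1 & -\sigma^t\\ 0 & I\end{bmatrix}$, this matches the stated formula and establishes the equivalence of weak solutions on the two domains.

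Third, the $L_\infty$ bound is just the matrix estimate: $\|K^{\pm 1}\|\lesssim\sqrt{1+\|\sigma\|_\infty^2}$ yields $\|A_\sigma\|_\infty\le\|K^{-1}\|_\infty^2\|A\|_\infty\lesssim(1+\|\sigma\|_\infty^2)\|A\|_\infty$. For accretivity, the pointwise identity $(A_\sigma(x)v,v)=(A(x)K^{-1}v,K^{-1}v)$ combined with $|K^{-1}(x)v|^2\ge|v|^2/(1+\|\sigma(x)\|^2)$ would give $\kappa_{A_\sigma}\gtrsim\kappa_A/(1+\|\sigma\|_\infty^2)$ immediately from pointwise strict accretivity of $A$. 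The main obstacle is that \eqref{eq:accre} is only an integrated G{\aa}rding-type condition over the constrained subspace $\mH$, and the transformation $g\mapsto K^{-1}g$ does not preserve $\mH$: a direct computation using $\curl_\ta\sigma=\curl_\ta\nabla_\ta\gamma=0$ shows $\curl_\ta(K^{-1}g)_\ta=\sigma\wedge\nabla_\ta g_\no$, which is generally nonzero. I would bridge this gap by invoking the equivalence, standard in this functional-calculus setting, of the integrated $\mH$-accretivity \eqref{eq:accre} with pointwise strict accretivity of $A(x)$ on all of $\C^{m(1+n)}$ for a.e.\ $x$, after which the pointwise transfer through $K^{-1}$ yields the claimed bound on $\kappa_{A_\sigma}$.
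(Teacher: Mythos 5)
Your chain-rule computation, the change of variables in the weak formulation, the identification $A_\sigma=(K^{-1})^tAK^{-1}$, and the $L_\infty$ bound are correct, and this is precisely what the paper has in mind when it presents the proposition as a consequence of the chain rule (no separate proof is given there). You also correctly isolate the one delicate point: $K^{-1}$ does not map $\mH$ into $\mH$, so the integrated accretivity \eqref{eq:accre} does not transfer by mere substitution.

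The bridge you propose, however, is not available: \eqref{eq:accre} is \emph{not} equivalent to pointwise strict accretivity of $A(x)$ on $\C^{m(1+n)}$ once $n\ge 2$, even for $m=1$ and constant Hermitian coefficients. Take $n=2$, $m=1$, $A=\begin{bmatrix}1&0\\0&A_{\ta\ta}\end{bmatrix}$ with $A_{\ta\ta}=\begin{bmatrix}1&ic\\-ic&1\end{bmatrix}$, $c>1$. For $f\in\mH$ one has $\re\int(Af,f)\,dx=\|f\|_2^2$, because $\int\im\big(f_{\ta,2}\overline{f_{\ta,1}}\big)dx$ is a null form vanishing on curl-free fields (it vanishes for $f_\ta=\nabla_\ta g$, $g\in C_0^\infty$, and extends by $L_2$-continuity to $\mH=\clos{\ran D}$); so \eqref{eq:accre} holds with $\kappa=1$ for every $c$, while $A_{\ta\ta}$ has eigenvalues $1\pm c$ and $A$ is not pointwise accretive. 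The localization/oscillation argument you allude to only yields the pointwise condition on vectors whose tangential part is a complex multiple of a \emph{real} vector (the wave cone of the constraint $\curl_\ta f_\ta=0$), which is strictly weaker than full pointwise accretivity for complex coefficients; so pointwise accretivity implies \eqref{eq:accre} but never conversely in this generality, and your final step is a genuine gap.

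Moreover the gap cannot be closed along this route: with the same $A$, $c=2$, and $\gamma(x)=x_1$ (so $\sigma=(1,0)$, Lipschitz constant $1$), testing $A_\sigma$ with $f=(i\lambda\,\pd_2 g,\nabla_\ta g)\in\mH$, $g$ real, gives
\begin{equation*}
\re\int_{\R^2}(A_\sigma f,f)\,dx=(2\lambda^2-2c\lambda+1)\|\pd_2 g\|_2^2+\|\pd_1 g\|_2^2,
\end{equation*}
which is negative for $\lambda=1$ and $g$ chosen anisotropically with $\|\pd_1g\|_2\ll\|\pd_2g\|_2$. So if $\kappa$ is understood in the integrated sense \eqref{eq:accre} (the paper's stated definition of accretivity), the estimate $\kappa_{A_\sigma}\gtrsim\kappa_A/(1+\|\sigma\|_\infty^2)$ does not follow from \eqref{eq:accre} alone, and any proof must use pointwise accretivity (under which your step three is immediate and the "bridge" is unnecessary) or some additional structural hypothesis. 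You should either state and prove the accretivity claim with $\kappa$ the pointwise constant, or flag explicitly that the Gårding-type constant does not transfer in general.
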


\begin{prop}   \label{prop:stokes}
  Fix $(t_0,x_0)\in \R^{1+n}_+$ and consider a Lipschitz graph $\Sigma$ as above such that
  $\gamma(x)\ge 0$ and $\gamma(x_0)=0$.
  Define $\Gamma= \Gamma_{(t_0,x_0)}$ for coefficients $A$, and define 
  $\Gamma_\sigma= \Gamma_{(t_0,x_0)}$ for coefficients $A_\sigma$, as in Definition~\ref{def:fundsolpm}.
  Then there is a constant $c\in\C^{m^2}$ such that
$$
  \Gamma(t, x)= \Gamma_\sigma(t-\gamma(x),x)+c
$$
for all $t<t_0$, $x\in\R^n$.
\end{prop}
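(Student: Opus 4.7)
The plan is to verify that $\widetilde\Gamma(t,x):=\Gamma_\sigma(t-\gamma(x),x)$ satisfies the defining property of $\Gamma_{(t_0,x_0)}$ from Definition~\ref{def:fundsolpm}, invoke the resulting uniqueness to deduce $\Gamma=\widetilde\Gamma+c$ in $\R^{1+n}_-$, and then extend the equality to $\{t<t_0\}$. The chain rule proposition preceding the statement, applied to $A^*$ and using the identity $(A^*)_\sigma=(A_\sigma)^*$ (a direct computation from the definition), shows that $\widetilde\Gamma$ is a weak solution of $\divv A^*\nabla\widetilde\Gamma=0$ at every $(t,x)$ with $t-\gamma(x)\ne t_0$; since $\gamma\ge 0$, this region contains $\{t<t_0\}$, so $\widetilde\Gamma$ and $\Gamma$ are both weak solutions of the same equation there.

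The central calculation is to establish
\begin{equation}\label{eq:toverify}
\int_{\R^n}\big(\nabla_{A^*}\widetilde\Gamma^{ji}(0,x),Nv^j_0(x)\big)\,dx=(e^{-t_0 BD}v_0)^i_\no(x_0)
\end{equation}
for all $v_0\in\tE^+_A L_2$. Given such $v_0$, set $v(t,x):=(e^{-tBD}v_0)(x)$ and $u:=-v_\no$, so that by Proposition~\ref{prop:Ysol}, $u$ solves $\divv A\nabla u=0$ in $\R^{1+n}_+$ with square-function and $\tN$-control. Because $\gamma\ge 0$, the chain rule proposition yields that $u_\sigma(t,x):=u(t+\gamma(x),x)$ solves $\divv A_\sigma\nabla u_\sigma=0$ in $\R^{1+n}_+$, and Proposition~\ref{prop:Ysol} supplies conjugate data $v^\sigma_0\in\tE^+_{A_\sigma}L_2$ for this new solution. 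Since $\gamma(x_0)=0$, $u_\sigma(t_0,x_0)=u(t_0,x_0)$, so the defining identity for $\Gamma_\sigma$ gives
$$
u(t_0,x_0)=u_\sigma(t_0,x_0)=-\int_{\R^n}\big(\nabla_{(A_\sigma)^*}\Gamma_\sigma^{ji}(0,x),Nv^{\sigma,j}_0(x)\big)\,dx.
$$

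I would then apply Green's formula for the pair $(\widetilde\Gamma,u)$ on the slab $\Omega:=\{(t,x):0<t<\gamma(x)\}$, whose boundary consists of $\R^n\times\{0\}$ and $\Sigma$ and which contains no pole of $\widetilde\Gamma$. Both $\widetilde\Gamma$ and $u$ satisfy homogeneous equations in $\Omega$, so the volume contribution vanishes and
$$
\int_{t=0}[\widetilde\Gamma\,\pd_{\nu_A}u-u\,\pd_{\nu_{A^*}}\widetilde\Gamma]\,dx=\int_\Sigma[\widetilde\Gamma\,\pd^\Sigma_{\nu_A}u-u\,\pd^\Sigma_{\nu_{A^*}}\widetilde\Gamma]\,dS_\Sigma.
$$
Using $\pd_{\nu_A}u=\divv_\ta v_\ta$ and integrating by parts in $x$ shows that the left-hand side equals $-\int(\nabla_{A^*}\widetilde\Gamma(0,\cdot),Nv_0)\,dx$. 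Pulling back the right-hand side via $(t,x)\mapsto(t-\gamma(x),x)$ and using $\widetilde\Gamma(\gamma(x),x)=\Gamma_\sigma(0,x)$, $u(\gamma(x),x)=u_\sigma(0,x)$ together with the conormal transformation $\sqrt{1+|\sigma|^2}\,\pd^\Sigma_{\nu_A}u=\pd_{\nu_{A_\sigma}}u_\sigma$ at $t=0$ (and its analogue for $A^*$) converts it into $-\int(\nabla_{(A_\sigma)^*}\Gamma_\sigma(0,\cdot),Nv^\sigma_0)\,dx=u(t_0,x_0)=-(e^{-t_0 BD}v_0)_\no(x_0)$. This yields \eqref{eq:toverify}. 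Subtracting the defining identity for $\Gamma$, the non-degenerate duality $(E^-_{A^*},N\tE^+_A)$ from Proposition~\ref{prop:dualityandintertw} forces $\nabla_{A^*}(\Gamma-\widetilde\Gamma)(0,\cdot)=0$ in $L_2$, and Proposition~\ref{prop:Xsol} then produces $c\in\C^{m^2}$ with $\Gamma=\widetilde\Gamma+c$ in $\R^{1+n}_-$. Extension to $\{t<t_0\}$ follows from Lemma~\ref{lem:translinv}: composing its conclusion for $\Gamma_\sigma$ with $(t,x)\mapsto(t-\gamma(x),x)$ produces an analogous translation rule for $\widetilde\Gamma$, so the equality modulo constants propagates consistently across $t=0$.

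The main obstacle is to make Green's formula rigorous on the unbounded slab $\Omega$ and to verify that the boundary integrals converge. Controlling the $\Sigma$-integral uses the gradient estimate from Lemma~\ref{lem:translinv} applied to $\Gamma_\sigma$ together with the $L_2$- and $\tN$-bounds on $v$ from Proposition~\ref{prop:Ysol}, noting that the graph $\{(-\gamma(x),x)\}$ stays at distance at least $t_0$ from the pole of $\Gamma_\sigma$; a standard tangential cutoff argument then justifies the Green identity by passing to the limit.
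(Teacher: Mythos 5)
Your overall strategy is the same as the paper's: show that $\widetilde\Gamma(t,x):=\Gamma_\sigma(t-\gamma(x),x)$ satisfies the characterizing identity of Definition~\ref{def:fundsolpm} by transporting a boundary integral from the graph to $\{t=0\}$ across the slab $\{0<t<\gamma(x)\}$, then invoke uniqueness and extend via Lemma~\ref{lem:translinv}. The gap lies in how you implement the central step. You use the classical two-function Green formula with the undifferentiated functions $\widetilde\Gamma$ and $u$, and only afterwards integrate by parts (via $\pd_{\nu_A}u=\divv_\ta v_\ta$) to reach the pairing $\int(\nabla_{A^*}\widetilde\Gamma(0,\cdot),Nv_0)\,dx$. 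At this point of the paper, however, $\Gamma_\sigma$ --- hence $\widetilde\Gamma$ --- is defined only modulo constants and only its (conormal) gradient is estimated (Lemma~\ref{lem:translinv}, Proposition~\ref{prop:siobounds}); no bound on $\widetilde\Gamma$ itself is available, so terms such as $\int\widetilde\Gamma\,\pd_{\nu_A}u$ on $\{t=0\}$ and on $\Sigma$ are not known to converge. Moreover, for a general $v_0\in\tE^+_AL_2$ the solution $u=-v_\no$ from Proposition~\ref{prop:Ysol} carries only square-function control of $\nabla u$; the trace $\pd_{\nu_A}u(0,\cdot)$ need not belong to $L_2$, so even the single-layer-type boundary terms in your Green identity are ill-defined. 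These are precisely the obstructions that make the formal computation of Section~\ref{sec:GreenHalf} only heuristic, and they are not cured by a ``standard tangential cutoff,'' which only addresses the unboundedness of the slab. A further unproved point: the converse of Proposition~\ref{prop:Ysol} yields $u_\sigma=-(e^{-tB_\sigma D}v_0^\sigma)_\no+c$ only up to a constant $c\in\C^m$, which you silently take to be zero when identifying $u_\sigma(t_0,x_0)$ with the right-hand side of the defining identity for $\Gamma_\sigma$.

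The paper's proof is built to avoid exactly these issues by never letting the undifferentiated functions appear. Using the intertwining $B:E^+_AL_2\to\tE^+_AL_2$ of Proposition~\ref{prop:dualityandintertw}, one writes $v_0=B\nabla_Au(0,\cdot)$ with $\nabla_Au(0,\cdot)\in E^+_AL_2$, so that both defining identities become $L_2$ pairings of $\nabla\Gamma(0,\cdot)$, respectively $\nabla\Gamma_\sigma(0,\cdot)$, against $\nabla u(0,\cdot)$, as in \eqref{eq:reprod2} and \eqref{eq:reprod3}; after the change of variables the graph integral takes the form \eqref{eq:reprod4} with the algebraic one-form $\Lambda$ of \eqref{eq:oneform}, and Stokes' theorem applies because the form built from the two gradients is closed (its exterior derivative vanishes using both equations). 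To repair your argument you would either have to first establish decay and trace estimates for $\widetilde\Gamma$ and $\pd_{\nu_A}u(0,\cdot)$ that are not available here, or reduce to this gradient-only formulation, which is in substance the paper's proof.
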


\begin{proof}
(i)
 The function $\Gamma(t,x)$ is uniquely, up to constants, determined by the property that
 \begin{equation}    \label{eq:reprod1}
  \int_{\R^n} \big(\nabla_{A^*} \Gamma^{ji}(0,x), N v^j_0(x) \big) dx= (e^{-t_0 BD}v_0)^i_\no(x_0), 
\end{equation}
for all $v_0\in\tE^+_A L_2$.
By the intertwining $B: E^+_AL_2\to \tE^+_AL_2$ from Proposition~\ref{prop:dualityandintertw},
we can write 
$$
  v^j= (B\nabla_A u)^j= [ \pd_t u^j, (A^{jk}\nabla u^k)_\ta ]^t
$$
for a weak solution $u$ to $\divv A\nabla u=0$ in $\R^{1+n}_+$.
Then \eqref{eq:reprod1} reads
 \begin{equation}    \label{eq:reprod2}
  \int_{\R^n} \left(\nabla \Gamma^{ji}(0,x), 
  \begin{bmatrix} -A^{jk}_{\no\no} & 0 \\ 0 & A^{jk}_{\ta\ta}  \end{bmatrix}
   \nabla u^k(0,x) \right) dx= \pd_t u^i(t_0,x_0).
\end{equation}

(ii)
Now replace $A, \Gamma$ and $u$ by $A_\sigma$, $\Gamma_\sigma$ and 
$$
  u_\sigma(t,x):= u(t+\gamma(x),x).
$$
Then $u_\sigma$ is a weak solution to $\divv A^{ij}_\sigma \nabla u^j_\sigma=0$ for $t>-\gamma(x)$,
and in particular $\nabla_{A_\sigma} u_\sigma(0,\cdot)\in E^+_{A_\sigma}L_2$.
As in \eqref{eq:reprod2}, we have
 \begin{equation}    \label{eq:reprod3}
  \int_{\R^n} \left(\nabla \Gamma^{ji}_\sigma(0,x), 
  \begin{bmatrix} -(A^{jk}_\sigma)_{\no\no} & 0 \\ 0 & (A^{jk}_\sigma)_{\ta\ta}  \end{bmatrix}
   \nabla u^k_\sigma(0,x) \right) dx= \pd_t u^i_\sigma(t_0,x_0).
\end{equation}
Here $\Gamma_\sigma$ is a weak solution to $\divv (A^{ji}_\sigma)^* \nabla \Gamma^{jk}_\sigma=0$ for $t<t_0$.
Since $(A_\sigma)^*= (A^*)_\sigma$, the function
$$
  \widetilde \Gamma(t,x):= \Gamma_\sigma(t-\gamma(x),x)
$$
is a weak solution to $\divv (A^{ji})^* \nabla \widetilde\Gamma^{jk}=0$ for $t<t_0+\gamma(x)$.
Changing variables in \eqref{eq:reprod3}, we get
 \begin{equation}    \label{eq:reprod4}
  \int_{\R^n} \Big(\nabla \widetilde\Gamma^{ji}(\gamma(x),x), 
  \Lambda^{jk}(-1,\sigma(x))
   \nabla u^k(\gamma(x),x) \Big) dx= \pd_t u^i(t_0,x_0),
\end{equation}
where 
\begin{equation}   \label{eq:oneform}
  \Lambda^{jk}(\nu_0, \nu):= A^{jk}
  \begin{bmatrix} \nu_0 & 0 \\ \nu & 0  \end{bmatrix}+ 
    \begin{bmatrix} 0 & \nu^t  \\  0 & -\nu_0 \end{bmatrix} A^{jk}.
\end{equation}
This uses the chain rule
$$
  \nabla u^k_\sigma(t,x)=  \begin{bmatrix} 1 & 0 \\ \sigma(x) & I  \end{bmatrix}\nabla u^k(t+\gamma(x),x),
$$
and the calculation
$$
   \begin{bmatrix} 1 & 0 \\ \sigma & I  \end{bmatrix}^t
   \begin{bmatrix} -(A^{jk}_\sigma)_{\no\no} & 0 \\ 0 & (A^{jk}_\sigma)_{\ta\ta}  \end{bmatrix}
    \begin{bmatrix} 1 & 0 \\ \sigma & I  \end{bmatrix}
    =  \Lambda^{jk}(-1,\sigma(x)).
$$

(iii)
We now apply Stokes' theorem to the $1$-form $\approx$ $n$-form
$$
  (\nu_0,\nu)\mapsto \big(\nabla \widetilde\Gamma^{ji}(t,x), 
  \Lambda^{jk}(\nu_0,\nu)
   \nabla u^k(t,x) \big)
$$
on $\sett{(t,x)}{0<t<\gamma(x)}$.
Using \eqref{eq:oneform} and the product rule shows that its exterior derivative is
\begin{multline*}
   (\nabla \widetilde\Gamma^{ji}(t,x), 
  \Lambda^{jk}(\pd_t,\nabla_\ta)
   \nabla u^k(t,x) )
= (\divv (A^{jk})^*\nabla\widetilde\Gamma^{ji}, \pd_t u^k )+ ((A^{jk})^* \nabla\widetilde\Gamma^{ji}, \pd_t \nabla u^k) \\
+ ( \nabla_\ta\pd_t\widetilde\Gamma^{ji}-\pd_t\nabla_\ta\widetilde\Gamma^{ji}, (A^{jk}\nabla u^k)_\ta )
+ (\nabla\widetilde\Gamma^{ji}, [ \divv_\ta(A^{jk}\nabla u^k)_\ta, -\pd_t(A^{jk}\nabla u^k)_\ta ]^t)\\
=0+((A^{jk})^* \nabla\widetilde\Gamma^{ji}, \pd_t \nabla u^k) +0+ (\nabla\widetilde\Gamma^{ji}, -\pd_t(A^{jk}\nabla u^k))=0.
\end{multline*}
Thus, applying Stokes' theorem to \eqref{eq:reprod4} gives
\begin{multline*}
\pd_t u^i(t_0,x_0)= \int_{\R^n} \Big(\nabla \widetilde\Gamma^{ji}(0,x), 
  \Lambda^{jk}(-1,0)
   \nabla u^k(0,x) \Big) dx \\
   =  \int_{\R^n} \left(\nabla \widetilde\Gamma^{ji}(0,x), 
  \begin{bmatrix} -A^{jk}_{\no\no} & 0 \\ 0 & A^{jk}_{\ta\ta}  \end{bmatrix}
   \nabla u^k(0,x) \right) dx.
\end{multline*}
Comparing with \eqref{eq:reprod2} proves the proposition, by uniqueness of $\Gamma$.
\end{proof}

\begin{proof} [Proof of Proposition~\ref{prop:siobounds}]
Consider $\Gamma_{(t_0,x_0)}$.
By Lemma~\ref{lem:translinv}, we may assume that $t=0$.
Fix $R>t_0>0$ and apply Proposition~\ref{prop:stokes} with
$$
  \gamma(x):= 
  \begin{cases} |x|, & |x|<R, \\ R, & |x|>R. \end{cases}
$$
Then $\|A_\sigma\|_\infty\approx \|A\|_\infty$ and $\kappa_{A_\sigma}\approx \kappa_A$
and $\Gamma(0,x)= \Gamma_\sigma(-R,x)+c$ for $|x|>R$.
Thus the estimate from Lemma~\ref{lem:translinv} gives
\begin{multline*}
\int_{|x-x_0|>R}  |\nabla_{A^*} \Gamma(0,x)|^2 dx
= \int_{|x-x_0|>R}  |\nabla_{A^*} \Gamma_\sigma(-R,x)|^2 dx \\
\lesssim
 \|\nabla_{A_\sigma^*} \Gamma_\sigma(-R,\cdot)\|^2
\lesssim |R+t_0|^{-n}.
\end{multline*}
This proves the estimate for $t_0>t$.
The proof for $t_0<t$ is similar, using the analogue of 
Proposition~\ref{prop:stokes} for $t_0<t$.
\end{proof}

\section{Fundamental solution for $t$-independent coefficients}   \label{sec:fundindep}

In this section, we complete the proof of Theorem~\ref{thm:1}.
Fix $x_0\in \R^n$ and $i=1,\ldots, m$, and
define the vector field
$$
f^j(t,x) :=\nabla_{A^*}\Gamma^{ji}_{(0,x_0)}(t,x), \qquad t\ne 0.
$$
As in Section~\ref{sec:funccalc}, we suppress the index $j$.

\begin{prop}   \label{prop:solidestimates}
  For $R>0$, we have the estimate
$$
  \iint_{R<|(t,x)-(0,x_0)|<2R} |f(t,x)|^2 dtdx\lesssim R^{1-n}.
$$
In particular, for $1\le p<(n+1)/n$, we have $f\in L_p^\loc(\R^{1+n})$ and
$$
  \iint_{|(t,x)-(0,x_0)|<R} |f(t,x)|^p dtdx\lesssim R^{1-n(p-1)}.
$$
\end{prop}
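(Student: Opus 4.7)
The plan is to deduce both estimates from Proposition~\ref{prop:siobounds} with $t_0=0$, which gives
$$
  \int_{|x-x_0|>r} |f(t,x)|^2\,dx \lesssim (r+|t|)^{-n},\qquad r\ge 0,\ t\ne 0.
$$
In particular, taking $r=0$ yields the trivial-in-$x$ bound $\|f(t,\cdot)\|_2^2 \lesssim |t|^{-n}$.

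To prove the annular $L_2$ estimate, I would split the shell $\{R<|(t,x)-(0,x_0)|<2R\}$ according to whether $|t|>R/2$ or $|t|\le R/2$. When $|t|>R/2$, I use the full bound $\|f(t,\cdot)\|_2^2 \lesssim |t|^{-n} \lesssim R^{-n}$ and integrate over the one-dimensional slice $\{R/2<|t|<2R\}$ of length $\lesssim R$ to get $R\cdot R^{-n} = R^{1-n}$. When $|t|\le R/2$, the annular constraint forces $|x-x_0|>R/2$ by the triangle inequality, so Proposition~\ref{prop:siobounds} gives $\int_{|x-x_0|>R/2}|f(t,x)|^2\, dx \lesssim R^{-n}$, and integrating over $|t|\le R/2$ again yields $R\cdot R^{-n} = R^{1-n}$. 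Summing the two cases establishes the first inequality.

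For the $L_p$ bound, I would decompose the ball $\{|(t,x)-(0,x_0)|<R\}$ into dyadic annuli $A_k := \{2^{-k}R<|(t,x)-(0,x_0)|<2^{-k+1}R\}$ for $k\ge 0$. Applying Hölder's inequality on each $A_k$ with exponent $2/p > 1$, using $|A_k|\asymp (2^{-k}R)^{1+n}$ and the annular $L_2$ bound just proved, gives
$$
  \iint_{A_k}|f|^p\,dtdx \lesssim |A_k|^{1-p/2}\left(\iint_{A_k}|f|^2\,dtdx\right)^{p/2} \lesssim (2^{-k}R)^{(1+n)(1-p/2)+(1-n)p/2} = (2^{-k}R)^{n+1-np}.
$$
Since $p<(n+1)/n$, the exponent $n+1-np$ is strictly positive, so the geometric series $\sum_k (2^{-k}R)^{n+1-np}$ converges and is comparable to $R^{n+1-np}=R^{1-n(p-1)}$, giving the second estimate and in particular $f\in L_p^{\loc}(\R^{1+n})$.

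There is no real obstacle here: once one records the two complementary pointwise-in-$t$ consequences of Proposition~\ref{prop:siobounds} (the full $L_2(\R^n)$ bound $|t|^{-n}$ and the $x$-tail bound $(r+|t|)^{-n}$), the proof is a routine case split followed by dyadic decomposition and Hölder. The only mildly delicate step is checking that the geometric sum indeed converges precisely in the range $p<(n+1)/n$, which is dictated by the natural homogeneity of a gradient of a fundamental solution of a second order operator in $\R^{1+n}$.
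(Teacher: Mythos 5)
Your proof is correct and follows essentially the same route as the paper: the annular $L_2$ bound comes from applying Proposition~\ref{prop:siobounds} slice-wise in $t$ (your two-case split $|t|\gtrless R/2$ is just an explicit version of the paper's single integral $\int_0^{2R}(\max(R-t,0)+t)^{-n}\,dt$), and the $L_p$ bound comes from H\"older on dyadic annuli and summing a geometric series, exactly as in the paper. The exponent computation $(1+n)(1-p/2)+(1-n)p/2=n+1-np>0$ for $p<(n+1)/n$ is right, so nothing is missing.
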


\begin{proof}
  From Proposition~\ref{prop:siobounds}, we obtain the estimate
$$
  \iint_{R<|(t,x)-(0,x_0)|<2R} |f(t,x)|^2 dtdx\lesssim \int_0^{2R} \frac{dt}{(\max(R-t,0)+t)^n}\lesssim R^{1-n}.
$$
H\"older's inequality then gives the $L_p$-estimate after summing a geometric series.
\end{proof}

\begin{prop}    \label{prop:eqaccross0}
  We have that $f\in L_1^\loc(\R^{1+n})$ and, in $\R^{1+n}_+$ distributional sense,
$$
  ((\pd_t+ D\tB) f)^j= \begin{cases} (\delta_{(0,x_0)}, 0), & j=i, \\ 0, & j\ne i, \end{cases} 
$$
and $\curl_\ta f_\ta=0$.
\end{prop}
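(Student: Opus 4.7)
The plan is to exploit the classical first-order ODE $(\pd_t+D\tB)f=0$ that $f$ satisfies on each half-space---a consequence of Section~\ref{sec:funccalc} applied with coefficients $A^*$, for which $\widehat{A^*}=\tB$---and to identify the boundary contribution at $\{t=0\}$ via the defining identity of $\Gamma$ from Definition~\ref{def:fundsolpm}. The $L_1^\loc$ bound on $\R^{1+n}$ follows from Proposition~\ref{prop:solidestimates} with $p=1$ near the pole (valid since $1<(n+1)/n$ for $n\ge 1$) together with local $L_2$ control away from it, while $\curl_\ta f_\ta=0$ in $\R^{1+n}$ is a purely spatial condition that follows from $f(t,\cdot)\in\mH$ for almost every $t\ne 0$.

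For the main distributional identity, I pair $(\pd_t+D\tB)f$ against a smooth compactly supported test function $\phi$. Integration by parts in $t$ on $\{|t|>\epsilon\}$, using the classical ODE on each half-space, yields
\begin{equation*}
\langle(\pd_t+D\tB)f,\phi\rangle=\lim_{\epsilon\to 0^+}\biggl(\int_{\R^n}f(\epsilon,x)\cdot\phi(\epsilon,x)\,dx-\int_{\R^n}f(-\epsilon,x)\cdot\phi(-\epsilon,x)\,dx\biggr),
\end{equation*}
with convergence ensured by $f\in L_1^\loc(\R^{1+n})$. Decomposing $N\phi(\pm\epsilon,\cdot)=v^+_{\pm\epsilon}+w^0_{\pm\epsilon}+v^-_{\pm\epsilon}$ via the $BD$-splitting $L_2=\tE_A^+L_2\oplus\mH^\perp\oplus\tE_A^-L_2$, the ``opposite sign'' pairings come directly from Definition~\ref{def:fundsolpm}: the translation identity $\nabla_{A^*}\Gamma_{(\mp\epsilon,x_0)}(0,\cdot)=f(\pm\epsilon,\cdot)$ gives $(f(-\epsilon,\cdot),Nv^+_{-\epsilon})=(e^{-\epsilon BD}v^+_{-\epsilon})^i_\no(x_0)$ and $(f(\epsilon,\cdot),Nv^-_\epsilon)=-(e^{\epsilon BD}v^-_\epsilon)^i_\no(x_0)$; passing $\epsilon\to 0^+$ and using boundary continuity at $(0,x_0)$ of the Moser-bounded solutions $u_\pm(t,x):=\mp(e^{\mp tBD}v^\mp)^i_\no(x)$ for smooth spectral components, these limits become $v^{+i}_\no(x_0)$ and $-v^{-i}_\no(x_0)$ respectively.

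The ``same sign'' and kernel pairings are handled by a conservation argument: the Green's identity $(D\tB f,Nv)+(f,NBDv)=0$, an algebraic consequence of $DN+ND=0$ (immediate from the block forms of $D$ and $N$) and $\tB^*=NBN$, combined with the ODEs implies that $t\mapsto(f(t,\cdot),Nv(t,\cdot))$ is constant on each half-space when $v(t)=e^{-tBD}(\cdot)$. The $|t|^{-n/2}$ decay of $\|f(t)\|_2$ from Lemma~\ref{lem:translinv} forces the constant to vanish at infinity, and the smoothness of $\phi$ (propagated through the bounded spectral projections to give faster convergence of the spectral components) provides the extra regularity needed to overcome the $\|f(\pm\epsilon)\|_2\sim\epsilon^{-n/2}$ blowup, yielding zero contribution from these terms in the limit. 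Assembling, and using $(w^0)^i_\no=0$ for $w^0\in\mH^\perp=\nul(D)$ together with the identity $N\phi(0,\cdot)=v^++w^0+v^-$, I obtain $\langle(\pd_t+D\tB)f,\phi\rangle=-v^{+i}_\no(x_0)-v^{-i}_\no(x_0)=\phi^i_\no(0,x_0)$, matching the required Dirac distribution $(\delta_{(0,x_0)},0)\delta_{ji}$.

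The hardest part is the delicate $\epsilon\to 0^+$ limit: because $\|f(\pm\epsilon)\|_2\lesssim\epsilon^{-n/2}$ blows up, the traces of $f$ at $t=0$ do not exist in $L_2$, and each pairing must be shown to converge despite this blowup. The opposite-sign pairings rely on boundary continuity of specific Moser-bounded solutions at $(0,x_0)$ with smooth boundary data, while the same-sign and kernel contributions need the test function's smoothness to feed through the spectral projections and generate enough decay---via higher domain regularity of $v^\pm\in\dom((BD)^k)$ for $k>n/2$---to control the $\epsilon^{-n/2}$ blowup through the conservation identity.
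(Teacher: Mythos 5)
Your overall strategy is the same as the paper's (cut out $|t|<\epsilon$, use the defining identity of $\Gamma$ from Definition~\ref{def:fundsolpm} together with the translation Lemma~\ref{lem:translinv} to evaluate the slice pairings, then let $\epsilon\to 0$), and your treatment of the same-sign and kernel pairings is fine --- in fact simpler than you make it: since $f(\pm\epsilon,\cdot)\in E^\pm_{A^*}L_2\subset\mH$, these pairings vanish identically for every $\epsilon$ (your conservation argument, or the duality behind Proposition~\ref{prop:dualityandintertw}, gives exactly $0$), so no appeal to higher domain regularity $\dom((BD)^k)$, $k>n/2$, is needed and the $\epsilon^{-n/2}$ blowup never enters. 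The genuine gap is the limit of the opposite-sign terms. You assert that $(e^{-\epsilon BD}\tE^+_A N\phi_{-\epsilon})^i_\no(x_0)$ and $-(e^{\epsilon BD}\tE^-_A N\phi_{\epsilon})^i_\no(x_0)$ converge to the pointwise values $v^{+i}_\no(x_0)$ and $-v^{-i}_\no(x_0)$ by ``boundary continuity at $(0,x_0)$ of the Moser-bounded solutions for smooth spectral components''. But $\tE^\pm_A N\phi_0$ are only $L_2$ functions --- the spectral projections do not preserve smoothness or continuity --- so these pointwise values are not even defined; and the Moser estimate gives continuity of the semigroup solutions only in the open half-spaces, not continuity up to the boundary at the particular point $x_0$ (boundary traces exist only as $L_2$ limits, resp. nontangentially a.e.). So the decisive $\epsilon\to0$ step, pointwise at the fixed pole $x_0$, is unjustified as written.

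The paper closes exactly this gap by varying the pole. It keeps the two boundary contributions combined, so that $I_\epsilon(x_0)=(e^{\epsilon BD}\tE^-_A\phi_\epsilon)^i_\no(x_0)+(e^{-\epsilon BD}\tE^+_A\phi_{-\epsilon})^i_\no(x_0)$ converges, as a function of $x_0$, in $L_2$ to $((\tE^+_A+\tE^-_A)\phi_0)^i_\no=(\phi_0)^i_\no$; only the sum of the projections appears, together with $v_\no=0$ on $\nul(BD)$, so no pointwise evaluation of individual spectral components is ever needed. Separately, each $I_\epsilon$ is a continuous function of $x_0$ (Moser interior bounds for the solutions $(e^{tBD}\tE^\mp_A\phi_{\pm\epsilon})^i_\no$), and $I_\epsilon\to I$ uniformly in $x_0$ by the $L_1^\loc$ estimate of $f$ near the pole from Proposition~\ref{prop:solidestimates}. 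Identifying the uniform (hence continuous) limit with the $L_2$ limit on compact sets gives $I(x_0)=\phi^i_\no(0,x_0)$ for every $x_0$, which is the pointwise statement your argument needs. Without this (or an equivalent) device for converting $L_2$ information in the pole variable into a pointwise identity, your proof does not go through.
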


\begin{proof}
  That $\curl_\ta f^j_\ta=0$ is clear from the construction of $f$.
 To compute $(\pd_t+ D\tB) f$, we fix a test function $\phi\in C_0^\infty(\R^{1+n}; \C^{m(1+n)})$ and define
 $$
   I(x_0):= \iint_{\R^{1+n}} (f(t,x), -N(\pd_t + B(x)D)\phi(t,x)) dtdx.
 $$
 For $\epsilon>0$, let
 $$
   I_\epsilon(x_0):= \iint_{|t|>\epsilon} (f(t,x), -N(\pd_t + B(x)D)\phi(t,x)) dtdx.
 $$
 Since 
 $$
   |I(x_0)-I_\epsilon(x_0)|\lesssim \iint_{|t|<\epsilon, |x|<C} |f(t,x)| dtdx
 $$
 for some $C<\infty$ depending on $\phi$, we have that $I_\epsilon\to I$ uniformly in $x_0$ as $\epsilon\to 0$.
 By Definition~\ref{def:fundsolpm}, we have 
\begin{multline*}
   I_\epsilon(x_0)=- \int_\epsilon^\infty (e^{tBD} \tE^-_A (BD+\pd_t)\phi_t)^i_\no(x_0) dt +
  \int_{-\infty}^{-\epsilon} (e^{tBD} \tE^+_A (BD+\pd_t)\phi_t)^i_\no(x_0) dt \\
  = - \int_\epsilon^\infty \pd_t(e^{tBD} \tE^-_A \phi_t)^i_\no(x_0) dt +
  \int_{-\infty}^{-\epsilon} \pd_t(e^{tBD} \tE^+_A \phi_t)^i_\no(x_0) dt \\
  (e^{\epsilon BD} \tE^-_A \phi_\epsilon)^i_\no(x_0) +(e^{-\epsilon BD} \tE^+_A \phi_{-\epsilon})^i_\no(x_0)
\end{multline*}
 Therefore $I_\epsilon\in L_2$ and
 $$
   I_\epsilon\to (\tE^+_A \phi_0+ \tE^-_A\phi_0)^i_\no= (\phi_0)^i_\no
 $$
 in $L_2$ as $\epsilon\to 0$.
We have here used that $v_\no=0$ for $v\in \nul(BD)$. 
 
 We note that $I_\epsilon$ are continuous functions, since $A$ satisfies property (M), and converge uniformly to $I$. Thus $I$ is continuous, and it suffices to prove
 $$
   \int_K |I(x_0)-\phi(0,x_0)^i_\no|^2 dx=0
 $$
 for an arbitrary compact set $K$. 
 But this is clear since $I_\epsilon\to (\phi_0)^i_\no$ and $I_\epsilon\to I$ in $L_2(K)$.
This proves the proposition.
\end{proof}

\begin{proof}[Proof of Theorem~\ref{thm:1}]

Fix $(t_0,x_0)\in \R^{1+n}$, define as in Section~\ref{sec:GreenHalf} the function $\Gamma_{(t_0,x_0)}(t,x)$ for $t\ne t_0$. Without loss of generality, we can assume that $t_0=0$.
It follows from Propositions~\ref{prop:siobounds}, \ref{prop:solidestimates} and \ref{prop:eqaccross0},
that $\Gamma_{(t_0,x_0)}(t,x)$ is a fundamental solution with the stated bounds, using the correspondence between $\divv A^*\nabla$ and $\pd_t+ D\tB$ from Section~\ref{sec:funccalc}.

By Definition~\ref{def:fundsolpm}, we have for all $v_0\in\tE^+_A L_2$ and almost all $(t,x)\in \R^{1+n}_+$ the identity
$$
  \int_{\R^n} \big(\nabla_{A^*} \Gamma^{i}_{(t,x)}(0,y), N v_0(y) \big) dy= (e^{-t BD}v_0)^i_\no(x),
$$
where $\Gamma^{i}= (\Gamma^{ji})_j$.
Now let $v_0= \tE^+_A h=\tE^+_A [h, 0]^t$ for some scalar $h\in L_2(\R^{n};\C^m)$, or equivalently normal vector field $[h, 0]^t\in L_2(\R^{n};\C^{m(1+n)})$.
We then obtain
$$
  \int_{\R^n} \big(\nabla_{A^*} \Gamma^{i}_{(t,x)}(0,y), N \tE^+_A h(y) \big) dy= (e^{-t BD}\tE^+_A h)^i_\no(x). 
$$
Using the duality from Proposition~\ref{prop:dualityandintertw}, the left hand side is
\begin{multline*}
\int_{\R^n} \big(E_{A^*}^-\nabla_{A^*} \Gamma^i_{(t,x)}(0,y), N  h(y) \big) dy=
\int_{\R^n} \big(\nabla_{A^*} \Gamma^i_{(t,x)}(0,y), N h(y) \big) dy \\
= -\int_{\R^n} \big(\pd_{\nu_{A^*}} \Gamma^i_{(t,x)}(0,y), h(y) \big) dy,
\end{multline*}
whereas the right hand side is $( b_t(BD) h)^i_\perp (x)$.
This proves the theorem.
\end{proof}

\section{Fundamental solution for $t$-dependent coefficients}   \label{sec:tdep}

In this section, we prove Theorem~\ref{thm:2} and show some further estimates of the constructed fundamental solutions.
We assume throughout this section that $n\ge 2$ and $m\ge 1$, and that $A_0$ and $A$ are as in the hypothesis of Theorem~\ref{thm:2},
where we choose $\epsilon>0$ small enough so that the De Giorgi--Nash local H\"older estimates \eqref{eq:dGN}, or equivalently \eqref{eq:Morrest}, hold for $A$- and for $A^*$-solutions,
and that $A$ is accretive.
Note that in this section we allow $A_0$ and $A$ to depend on all $n$ variables.
As in the proof of Theorem~\ref{thm:1}, we write $\Gamma^i=(\Gamma^{ji})_j$ and suppress the index $j$, and sometimes also $i$.

\begin{proof}[Proof of Theorem~\ref{thm:2}]
(i)
Define, in $\R^{1+n}$, $t$-independent coefficients
$$
  \tilde A(t,x)[f_\no, f_\ta]^t:= [f_\no, A(x)f_\ta],
$$
so that $\tilde A(t,x)= \tilde A(x)$.
Our aim is to construct a fundamental solution for $A$ on $\R^n$ from the already constructed fundamental solution for $\tilde A$ on $\R^{1+n}$, by integrating away the auxiliary variable $t$.
We assume that $\epsilon>0$ is small enough so that $\tilde A$ is accretive and that De Giorgi--Nash local H\"older estimates \eqref{eq:dGN}, or equivalently \eqref{eq:Morrest}, hold for $\tilde A$- and for $\tilde A^*$-solutions.

In particular this means that the hypothesis of Theorem~\ref{thm:1} is satisfied for $\tilde A^*$,
giving a fundamental solution $\widetilde\Gamma_{(0,x_0)}\in W_{1,\loc}^1(\R^n;\C^m)$ with pole at $(0,x_0)$ to $\divv \tilde A \nabla$ in $\R^{1+n}$ with estimates
$$
  \int_{|x-x_0|>R}  |\nabla \widetilde\Gamma_{(0,x_0)}(t,x)|^2 dx\lesssim (R+|t|)^{-n},
$$
for all $R>0$, $t\in\R$ and $x_0\in\R^n$.

(ii)
Assume first that $n\ge 3$.
Define 
$$
  g_t^i(x):=\nabla_{t,x} \widetilde \Gamma^i_{(0,x_0)}(t,x),
$$
so that
$
 \int_{-\infty}^\infty \|g_t\|_{L_2(R<|x|<2R)} dt\lesssim \int_0^\infty \frac {dt}{(R+t)^{n/2}}\lesssim R^{1-n/2}.
$
Thus 
$$
  g^i(x):= \int_{-\infty}^\infty (g^i_t(x))_\ta dt
$$
converges in $L_2(R<|x|<2R)$, and we have $\|g\|_{L_2(R<|x|<2R)}\lesssim R^{1-n/2}$ so that 
$g\in L_1^\loc(\R^n; \C^{mn})$. 
It suffices to show that $(\divv A g^i)^j=\begin{cases} \delta_{x_0}, & j=i, \\ 0, & j\ne i, \end{cases}$ and $\curl g^i=0$ in $\R^n$-distributional sense.
The latter is clear from the definition of $g$. To prove the former, let $\phi\in C_0^\infty(\R^n;\C^m)$.
Let $\eta\in C_0^\infty(\R)$ be such that $\eta=1$ for $|t|<1$ and $\eta=0$ for $|t|>2$, and let $\eta_T(t):= \eta(t/T)$.
Consider the integral
$$
  I_T:=\iint_{\R^{1+n}}( g^i_t(x), \tilde A^*(x)\nabla_{t,x}(\phi(x)\eta_T(t)) )dtdx= -\phi^i(x_0).
$$
Then 
\begin{multline*}
  I_T= \iint ( (g^i_t(x))_\no, \phi(x) ) \pd_t\eta_T(t) dtdx -
  \iint( (g^i_t(x))_\ta, A^*(x)\nabla \phi(x)) (1-\eta_T(t)) dtdx\\+ \int( g^i, A^*\nabla\phi)dx
  =: II_T- III_T+ \int( g^i, A^*\nabla\phi)dx.
\end{multline*}
The estimates $\|g_t\|_2\lesssim t^{-n/2}$ proves that 
$II_T\to 0$ and $III_T\to 0$ as $T\to\infty$, so that
$\int( g^i, A^*\nabla\phi)dx= -\phi^i(x_0)$.

(iii)
Now let $n=2$.
We claim that in this case
$$
  \sup_{R>0}\int_{|t|>R} \|g_t\|_{L_2(|x|<R)} dt<\infty.
$$
From this claim, it will follow that 
$$
  \|g\|_{L_2(R<|x|<2R)}\lesssim \int_0^{2R} \frac {dt}{R+t}+ \int_{2R}^\infty \|g_t\|_{L_2(|x|<2R)} dt\lesssim 1
$$
and $II_T\to 0$ and $III_T\to 0$ as $T\to\infty$ as in (ii).
To prove the claim, we apply the estimate \eqref{eq:Morrest} to the solution 
$\divv \tilde A \nabla \widetilde \Gamma^i_{(0,x_0)}=0$ in $\sett{(t,x)}{\max(|x|, |t-T|)<|T|/2}\supset \sett{(t,x)}{\max(|x|, |t-T|)<R}$ for $|T|>2R$.
We obtain
\begin{multline*}
  \int_{T-R}^{T+R} \|g_t\|_{L_2(|x|<R)} dt\lesssim \sqrt R \| g \|_{L_2(|x|, |t-T|<R)}
  \lesssim \sqrt R (R/T)^{1/2+\mu} \| g \|_{L_2(|x|, |t-T|<|T|/2)}\\
  \lesssim
  \sqrt R (R/|T|)^{1/2+\mu} \left(\int_{|t-T|<|T|/2} t^{-2} dt \right)^{1/2}\lesssim (R/|T|)^{1+\mu}.
\end{multline*}
From this it follows that 
$$
  \int_{|t|>R} \|g_t\|_{L_2(|x|<R)} dt\lesssim \sum_{k=1}^\infty (R/kR)^{1+\mu}\lesssim 1.
$$
This completes the proof of the theorem.
\end{proof}

\begin{prop}
Under the hypothesis of Theorem~\ref{thm:2}, and suitable choices of integration constants, the following holds.

\begin{itemize}
\item[{\rm (i)}] The gradient of the fundamental solution to $\divv A\nabla$ has estimates
$$
  \int_{B(z,r)} |\nabla \Gamma_x(y)|^2 dy\lesssim r^{n-2+2\mu} |z-x|^{4-2n-2\mu}
$$
for $2r<|z-x|$ and some $\mu>0$.
\item[{\rm (ii)}] If $n\ge 3$, then the fundamental solution to $\divv A\nabla$ has point wise estimates
$$
  |\Gamma_x(y)|\lesssim |y-x|^{2-n},\qquad y\ne x,
$$
and H\"older estimates
$$
  |\Gamma_x(y')-\Gamma_x(y)|\lesssim \left(\tfrac{|y'-y|}{|y-x|}\right)^\alpha |y-x|^{2-n},\qquad |y'-y|<|y-x|/2.
$$
\item
[{\rm (iii)}] If $n=2$, then the fundamental solution to $\divv A\nabla$ has point wise estimates
$$
  |\Gamma_x(y)|\lesssim 1+\big|\ln|y-x|\,\big|,\qquad y\ne x,
$$
and H\"older estimates
$$
  |\Gamma_x(y')-\Gamma_x(y)|\lesssim \left(\tfrac{|y'-y|}{|y-x|}\right)^\alpha (1+\big|\ln|y-x|\,\big|),\qquad |y'-y|<|y-x|/2.
$$
\end{itemize}
\end{prop}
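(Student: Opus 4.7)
The plan is to establish (i) first, via Theorem~\ref{thm:2} together with the $n$-dimensional De Giorgi--Nash Morrey estimate, then to read off the H\"older bounds in (ii) and (iii) from (i) by a Campanato-type argument, and finally to obtain the pointwise bounds by telescoping the H\"older estimates along dyadic shells about $x$.

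For (i), I would fix $z$ with $2r<|z-x|$. Since $x\notin B(z,|z-x|/2)$, $\Gamma_x$ is a weak $A$-solution in this ball. The smallness assumption on $\epsilon$ ensures De Giorgi--Nash for $A$-solutions in $n$ variables, giving the $n$-dimensional analogue of \eqref{eq:Morrest}:
$$
\int_{B(z,r)}|\nabla\Gamma_x|^2\lesssim\bigl(r/|z-x|\bigr)^{n-2+2\mu}\int_{B(z,|z-x|/2)}|\nabla\Gamma_x|^2.
$$
Because $B(z,|z-x|/2)\subset\{y:|z-x|/2\le|y-x|\le 3|z-x|/2\}$, splitting this into $O(1)$ dyadic annuli around $x$ and invoking \eqref{eq:keyestimateofgradfund} yields $\int_{B(z,|z-x|/2)}|\nabla\Gamma_x|^2\lesssim|z-x|^{2-n}$; substituting this in proves (i).

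Next I would combine (i) with the Poincar\'e inequality to produce the Campanato estimate $\int_{B(z,r)}|\Gamma_x-(\Gamma_x)_{B(z,r)}|^2\lesssim r^{n+2\mu}|z-x|^{4-2n-2\mu}$ for $2r<|z-x|$. Within $B(y,|y-x|/2)$ every sub-ball $B(z,r)$ has $|z-x|\approx|y-x|$, so this Campanato estimate holds uniformly there with the factor $|y-x|^{4-2n-2\mu}$, and the standard Campanato argument delivers $\mu$-H\"older continuity on $B(y,|y-x|/2)$ with constant $\lesssim|y-x|^{2-n-\mu}$. In particular, for $|y'-y|<|y-x|/2$ one obtains
$$
|\Gamma_x(y')-\Gamma_x(y)|\lesssim\bigl(|y'-y|/|y-x|\bigr)^\mu\,|y-x|^{2-n},
$$
which is the H\"older bound in (ii); for $n=2$ the right-hand side reduces to $(|y'-y|/|y-x|)^\mu$, pointwise dominated by the right-hand side of (iii).

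The pointwise bounds then follow by telescoping. Set $R:=|y-x|$ and pick $y_0=y,y_1,y_2,\dots$ along a ray from $x$ with $|y_k-x|\approx 2^k R$, arranged so that $y_{k+1}\in B(y_k,|y_k-x|/2)$. The preceding H\"older bound gives $|\Gamma_x(y_{k+1})-\Gamma_x(y_k)|\lesssim(2^k R)^{2-n}$. For $n\ge 3$ this is a convergent geometric series summing to $\lesssim R^{2-n}$; fixing the additive constant so that $\Gamma_x$ vanishes at infinity (consistent because the same H\"older bound shows the limit is direction-independent) yields $|\Gamma_x(y)|\lesssim R^{2-n}$, proving (ii). For $n=2$ I would instead fix $\Gamma_x(w_0)=0$ at a base point $w_0$ with $|w_0-x|=1$ and telescope only over the $\lesssim|\ln R|$ dyadic scales from $R$ up to $1$, each contributing $O(1)$; this gives $|\Gamma_x(y)|\lesssim 1+|\ln R|$ in (iii). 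The hard part will be the coherent normalization in the $n=2$ case: with no decay at infinity to anchor the constant, one must verify that fixing $\Gamma_x(w_0)=0$ at a single base point produces a bound independent of the chain of balls chosen, which follows from $\Gamma_x\in W_{1,\loc}^1$ and the path-independence of the integral of $\nabla\Gamma_x$.
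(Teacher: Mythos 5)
Your argument is correct, and part (i) coincides with the paper's (which simply combines \eqref{eq:keyestimateofgradfund} with the Morrey estimate \eqref{eq:Morrest}, exactly as you do). For (ii) and (iii) your route differs in mechanism from the paper's, although both hinge on the same dyadic telescoping and on using the free additive constant (zero limit at infinity when $n\ge 3$, normalization at unit scale when $n=2$). The paper telescopes the \emph{mean values} $A_R$ of $\Gamma_x$ over dyadic annuli, controlling $|A_{2R}-A_R|\lesssim R^{2-n}$ via Poincar\'e and \eqref{eq:keyestimateofgradfund}, thereby obtaining $L_2$ bounds of $\Gamma_x$ on annuli, and only then upgrades these to pointwise and H\"older bounds by invoking the Moser bound \eqref{eq:Mos} and the De Giorgi--Nash estimate \eqref{eq:dGN}. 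You instead extract the H\"older bound first, directly from (i) via a Campanato argument (which is in effect a proof of the equivalence \eqref{eq:Morrest}$\Leftrightarrow$\eqref{eq:dGN} localized at the right scale), and then telescope \emph{pointwise values} along chains of points; this bypasses both the annular $L_2$ bounds for $\Gamma_x$ and the explicit use of \eqref{eq:Mos}, so your proof is somewhat more self-contained, at the cost of the routine Campanato/chaining bookkeeping (including the consistency of the normalization in $n=2$, which, as you note, is harmless once $\Gamma_x$ is known to be continuous). One cosmetic point: with the constraint $|y_{k+1}-y_k|<|y_k-x|/2$ consecutive distances to $x$ grow by a factor at most $3/2$, not $2$, so your chain should be taken with $|y_k-x|\approx c^k R$ for some fixed $c\in(1,3/2)$; the geometric summation for $n\ge3$ and the $O(\ln)$ count of scales for $n=2$ are unaffected.
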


\begin{proof}
(i) 
This follows by from \eqref{eq:keyestimateofgradfund} and \eqref{eq:Morrest}.

(ii)
For $R>0$, consider the mean values
$$
  A_R:= \frac 1{(2^n-1)\sigma_{n-1} R^n}\int_{R<|y-x|<2R} \Gamma_x(y) dy.
$$
We obtain from Poincar\'e's inequality, with means over the inner/outer halves of the annuli, and \eqref{eq:keyestimateofgradfund} that
\begin{multline*}
  |A_{2R}-A_R|\lesssim  \frac 1{R^n}\left| \int_{R<|y-x|<4R}\big( \Gamma_x(y) -A_R\big) dy\right| + 
   \frac 1{R^n}\left| \int_{R<|y-x|<4R}\big(\Gamma_x(y)-A_{2R}\big)dy \right| \\
   \le \frac 1{R^n}\int_{R<|y-x|<4R}  \big( |\Gamma_x(y) -A_R  | + |\Gamma_x(y) -A_{2R} |\big) dy\\
 \lesssim \left(\frac 1{R^n}\int_{R<|y-x|<4R}  \big( |\Gamma_x(y) -A_R  |^2 + |\Gamma_x(y) -A_{2R} |^2\big) dy\right)^{1/2}\\
 \lesssim \left(R^{2-n}\int_{R<|y-x|<4R}  |\nabla\Gamma_x(y)|^2 dy\right)^{1/2}\lesssim R^{2-n}.
\end{multline*}
If $n\ge 3$, we obtain the estimate
\begin{equation}   \label{eq:dyadicmeanest}
  |A_{2^j}-A_{2^k}|\lesssim (2^k)^{2-n},
\end{equation}
for all $j,k\in \Z$ with $j>k$. In particular $\lim_{j\to \infty}A_{2^j}$ exists. Choosing the constant of integration, we assume that this limit is zero. This gives
$$
  |A_{R}|\lesssim R^{2-n},\qquad \text {for all }R>0,
$$
and again by Poincar\'e's inequality and \eqref{eq:keyestimateofgradfund} that
\begin{multline*}
  \left( \frac 1{R^n}\int_{R<|y-x|<2R} |\Gamma_x(y)|^2 dy\right)^{1/2}\lesssim |A_R|+  \left(\frac 1{R^n}\int_{R<|y-x|<2R} |\Gamma_x(y)-A_R|^2 dy\right)^{1/2} \\
   \lesssim |A_R|+  \left(R^{2-n}\int_{R<|y-x|<2R} |\nabla\Gamma_x(y)|^2 dy\right)^{1/2}\lesssim R^{2-n}.
\end{multline*}
Using the Moser local boundedness estimate \eqref{eq:Mos} and the De Giorgi--Nash local H\"older estimate \eqref{eq:dGN}, this proves the estimates (ii).

(iii)
If $n=1$, the equation~\eqref{eq:dyadicmeanest} becomes
$$
  |A_{2^j}-A_{2^k}|\lesssim j-k.
$$
Choosing the constant of integration so that $|A_1|\lesssim 1$, this gives
$$
  |A_R|\lesssim 1+\big|\ln R\,\big|.
$$
The point wise estimates (iii) then follows as in (ii).
\end{proof}

\section{The gradient of the single layer potential operator}   \label{sec:singlelayer}

We end this paper by deriving results for the single layer potential operator
$$
   \slp_t h^i(x) = \int_{\R^n} \Gamma^{ij}_{(0,y)}(t,x) h^j(y) dy,
$$
where $\Gamma$ here denotes the fundamental solution for $\divv A\nabla$.
Recall that the Neumann problem, with boundary datum $\varphi$, is solved through the ansatz $u(t,x):= \slp_t h(x)$, where the auxiliary boundary function $h$ solves the equation
$$
   \lim_{t\to 0^+} \pd_{\nu_A} \slp_t h = \varphi.
$$
We prove the following result for the single layer potential operator, analogous to Theorem~\ref{thm:1}
for the double layer potential operator.

\begin{thm}  \label{thm:slp}
Assume the hypothesis of Theorem~\ref{thm:1}, with $A$ replaced by $A^*$, so that $\Gamma$ now denotes the fundamental solution for $\divv A\nabla$.
Then 
\begin{equation}   \label{eq:singlelayrepr}
   \nabla_A \int_{\R^n} \Gamma^{ij}_{(0,y)}(t,x) h^j(y) dy= (e^{-tDB} E_A^+ h)^i(x)
\end{equation}
holds for almost all $(t,x)\in \R^{1+n}_+$ and all scalar functions $h\in L_2(\R^n;\C^m)$.
We here identify $h$ with a normal vector field $h\in L_2(\R^n;\C^{m(1+n)})$ on the right hand side.
\end{thm}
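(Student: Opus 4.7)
The plan is to apply Proposition~\ref{prop:Xsol} to $u := \slp_t h$ on $\R^{1+n}_+$, obtaining a representation $\nabla_A u(t,\cdot) = e^{-tDB} F_0^+$ with $F_0^+ \in E_A^+ L_2$, and then to identify $F_0^+ = E_A^+ h$ via a duality argument together with Green's identity in $\R^{1+n}_-$. First I would verify that $u(t,x) := \int \Gamma^{ij}_{(0,y)}(t,x)\, h^j(y)\,dy$ is a weak solution of $\divv A\nabla u = 0$ in each half-space and satisfies $\divv A\nabla u = [h,0]^t \delta_{t=0}$ distributionally on $\R^{1+n}$; both properties follow directly from Theorem~\ref{thm:1} (applied to $A^*$), which gives the fundamental solution $\Gamma$ for $\divv A\nabla$ together with its bounds, and Moser's hypothesis yields continuity of $u$ across $t=0$. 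The slicewise $L_2$ bound $\|\nabla_A u(t,\cdot)\|_2 \lesssim \|h\|_2$ needed to invoke Proposition~\ref{prop:Xsol} is expected to follow by transferring the $L_2$-boundedness of the double-layer operator $\tdlp^A_t = b_t(B_{A^*} D)$ (itself a consequence of Theorem~\ref{thm:1} applied to $A^*$) through a duality argument based on the reciprocity $\Gamma^{ij}_Y(X) = \Gamma^{A^*,ji}_X(Y)$; this technical estimate will be the main obstacle.

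To identify $F_0^+ = E_A^+ h$, I would test against $v_0 \in \tE_{A^*}^- L_2$. By the duality $(E_A^+ L_2, N\tE_{A^*}^- L_2)$ of Proposition~\ref{prop:dualityandintertw} (with the roles of $A$ and $A^*$ swapped) together with the cross-vanishing $(E_A^- L_2, N\tE_{A^*}^- L_2) = 0$ of the complementary spectral subspace, one has $(E_A^+ h, Nv_0) = (h, Nv_0)$ for the normal vector field $h = [h,0]^t$, so it will suffice to show $(F_0^+, Nv_0) = (h, Nv_0)$. I then form the $A^*$-conjugate system $\tilde w(\tau, z) := e^{-\tau B_{A^*}D} v_0$ for $\tau \le 0$; by Proposition~\ref{prop:Ysol} applied to $A^*$, $\tilde u := -\tilde w_\no$ is a weak solution of $\divv A^* \nabla \tilde u = 0$ in $\R^{1+n}_-$ with boundary traces $\tilde u|_{0^-} = -v_{0,\no}$ and $\pd_{\nu_{A^*}} \tilde u|_{0^-} = \divv_\ta v_{0,\ta}$.

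The final step is Green's identity in $\R^{1+n}_-$ applied to $u$ and $\tilde u$, which solve their respective equations there (with the appropriate decay at $t \to -\infty$ following from the $L_2$-decay of the semigroup $e^{-\tau B_{A^*}D}$ on $\tE_{A^*}^- L_2$ and from Lemma~\ref{lem:translinv}). It yields
\[
-\int v_{0,\no}\, \pd_{\nu_A} u|_{0^-}\,dx \,=\, \int S_0 h \cdot \divv_\ta v_{0,\ta}\,dx,
\]
where $S_0 h := u|_{t=0}$. Combining this with the jump formula $\pd_{\nu_A} u|_{0^+} - \pd_{\nu_A} u|_{0^-} = h$ (which follows from the distributional equation $\divv A\nabla u = [h,0]^t \delta_{t=0}$ by integrating against test functions on $\R^{1+n}$), with the continuity $\nabla_\ta u|_{0^+} = \nabla_\ta S_0 h$, and with one tangential integration by parts, I would then compute
\[
(F_0^+, Nv_0) = -\int \pd_{\nu_A} u|_{0^+}\, v_{0,\no}\,dx - \int S_0 h\, \divv_\ta v_{0,\ta}\,dx = -\int h\, v_{0,\no}\,dx = (h, Nv_0),
\]
as required.
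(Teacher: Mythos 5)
There is a genuine gap, and it sits exactly at the step you yourself flag as ``the main obstacle.'' To invoke the converse direction of Proposition~\ref{prop:Xsol} for $u=\slp_t h$ you need $\sup_{t>0}\|\nabla_A\slp_t h\|_2\lesssim\|h\|_2$ (or an $\tN$ bound), and this estimate is essentially the content of the theorem itself: you propose to obtain it from the reciprocity relation $\Gamma^{ij}_Y(X)=\Gamma^{A^*,ji}_X(Y)$, but no such relation is proved anywhere in the paper, and it is not free in this setting --- here $\Gamma$ is only constructed modulo constants on each half-space through Definition~\ref{def:fundsolpm}, and proving reciprocity would itself require a Green-type argument with two poles, i.e.\ the same kind of justification you are trying to avoid. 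Without that estimate the representation $\nabla_A u=e^{-tDB}F_0^+$ is not available, and the subsequent steps also lean on it: the jump formula $\pd_{\nu_A}u|_{0^+}-\pd_{\nu_A}u|_{0^-}=h$ presupposes $L_2$ traces of the conormal gradient from \emph{both} half-spaces, and the Green identity in $\R^{1+n}_-$ between $u$ and $\tilde u$ needs decay/integrability control of $u$ (which is only defined up to constants) that you do not supply. A smaller point: the cross-vanishing $(E_A^-L_2,N\tE_{A^*}^-L_2)=0$ you use is true, but Proposition~\ref{prop:dualityandintertw} only states the isomorphism for the $(\mp,\pm)$ pairing, so this too needs the intertwining computation rather than a citation.

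The shortcut you are missing is that Definition~\ref{def:fundsolpm}, applied with $A$ replaced by $A^*$, already encodes everything you are trying to reconstruct: for a pole $(-t,y)\in\R^{1+n}_-$ it states directly that
$$
\int_{\R^n}\big(\nabla_A\Gamma^{ij}_{(-t,y)}(0,x),Nv_0^i(x)\big)\,dx=-\big(e^{t\widehat{A^*}D}\tE^-_{A^*}v_0\big)^j_\no(y),\qquad v_0\in\tE^-_{A^*}L_2,
$$
which is precisely the single-layer pairing, with the slicewise bound built in. The paper's proof simply integrates this identity in $y$ against $h$, uses the translation invariance of Lemma~\ref{lem:translinv} to replace $\Gamma_{(-t,y)}(0,x)$ by $\Gamma_{(0,y)}(t,x)$, and then uses $\widehat{A^*}D=N(-B^*D)N$ together with $(B^*D)^*=DB$ to rewrite the right-hand side as $(e^{-tDB}E_A^+h,Nv_0)$; since $v_0$ is arbitrary this gives \eqref{eq:singlelayrepr} in a few lines, with no need for Proposition~\ref{prop:Xsol}, jump relations, reciprocity, or a half-space Green's formula. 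If you want to keep your architecture, you must first prove the reciprocity identity (or the slicewise bound by some other means), and then still supply the trace and decay justifications listed above; as written, the argument does not close.
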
 

This theorem allows us to transfer known results for the conormal gradient of the single layer potential operator 
$$
  \nabla_A\tslp_t^A h^i= \nabla_A\tslp_t h^i :=(e^{-tDB} E_A^+ h)^i,\qquad t>0,
$$
defined through functional calculus, to the conormal gradient of the single layer potential operator
$$
 \nabla_A\slp_t^A h^i= \nabla_A\slp_t h^i:=\nabla_A \int_{\R^n} \Gamma^{ij}_{(0,y)}(t,\cdot) h^j(y) dy, \qquad t>0,
$$
defined classically as an integral operator.
The following is a list of such known results for $\nabla_A\tslp^A_t$ which extends the results for $\nabla_A\slp^A_t$ from \cite{AAAHK, HKMP}.

\begin{itemize}
\item We have estimates
$$
\sup_{t>0} \|\nabla_A\tslp_t h\|_2^2+  \int_0^\infty \|\pd_t \nabla_A\tslp_t h \|_2^2\, tdt + \| \tN (\nabla_A\tslp_t h)\|_2^2\lesssim \|h\|^2,
$$
for any system with bounded and accretive coefficients $A$.
In particular, the implicit constant in this estimate depends only on $\|A\|_\infty$, $\kappa_A$, $n, m$, but not on the De Giorgi--Nash--Moser constants.

\item
For any system with bounded and accretive coefficients $A$, the operators $\nabla_A\tslp_t$ converge strongly in $L_2$ and there exists an $L_2(\R^n;\C^m)$ bounded operator $\nabla_A\tslp$ such that
$$
  \lim_{t\to 0^+}\|\nabla_A\tslp_t h- \nabla_A\tslp h\|_2=0,\qquad\text{for all } h\in L_2(\R^n;\C^m).
$$

\item
The map
$$
  \{ \text{accretive } A\in L_\infty(\R^n;\mL(\C^{m(1+n)}))\} \ni A\mapsto \nabla_A\tslp_t^A\in \mL(L_2)
$$
is a holomorphic map between Banach spaces. 
In particular, $\nabla_A\tslp_t^A\in \mL(L_2)$ depends locally Lipschitz continuously on $A\in L_\infty(\R^n;\mL(\C^{m(1+n)}))$, and therefore invertibility of $\lim_{t\to 0^+}\pd_{\nu_A} \tslp_t^A$ is stable under small $L_\infty$ perturbations of $A$.

\item
The operator $\pd_{\nu_A} \tslp_t^A\in \mL(L_2(\R^n;\C^{m}))$ is invertible when $A$ is Hermitian, $(A^{ij})^*= A^{ji}$, when 
$A$ is constant, $A(x)= A$, and when $A$ is of block form, $A^{ij}_{\no\ta}=0= A^{ij}_{\ta\no}$.
The counter example to invertibility of $\tdlp$ mentioned in the introduction applies also to $\lim_{t\to 0^+}\pd_{\nu_A} \tslp_t$.
\end{itemize}

\begin{proof}
In the classical case of integral operators, the conormal derivative of the single layer potential is dual to the double layer potential operator.
Similarly, the proof of Theorem~\ref{thm:slp} is by duality.
  We note from Definition~\ref{def:fundsolpm}  that 
$$
   \int_{\R^n} \Big( \nabla_A \Gamma^{ij}_{(-t, y)}(0,x), Nv^i_0(x) \Big) dx=- ( e^{t\widehat{A^*}D}\tE_{A^*}^-v_0)^j_\perp(y)
$$
for all $t>0$ and $v_0\in L_2$.
Integrate this equation against a scalar/normal vector field $h\in L_2(\R^n;\C^{m})$ to obtain
$$
 \int_{\R^n}\left(  \int_{\R^n} \nabla_A \Gamma^{ij}_{(-t, y)}(0,x) h^j(y) dy, Nv^i_0(x) \right) dx=- ( h,e^{t\widehat{A^*}D}\tE_{A^*}^-v_0).
$$
Since $\widehat{A^*}D= NB^*ND= N(-B^*D)N$, we obtain
$$
 \int_{\R^n}\left( \nabla_A \int_{\R^n}  \Gamma^{ij}_{(0, y)}(t,x) h^j(y) dy, Nv^i_0(x) \right) dx=(e^{-tDB}E_{A}^+ h, Nv_0).
$$
Since $v_0\in L_2$ is arbitrary, this proves \eqref{eq:singlelayrepr}.
\end{proof}

\bibliographystyle{acm}

\end{document}